\numberwithin{equation}{section}
\newcommand{\bC}{\mathbb{C}}
\newcommand{\bE}{\mathbb{E}}
\newcommand{\mcal}[1]{\mathcal{#1}}
\def\({ \left( }
\def\){ \right)}
\def\rmU{\mathrm{U}}
\def\rmO{\mathrm{O}}
\def\rmSp{\mathrm{Sp}}
\def\rmU{\mathrm{U}}
\DeclareMathOperator{\Tr}{Tr}
\def\({ \left( }
\def\){ \right)}
\def\trans#1{\mathord{#1}^{\mathrm{T}}}
\def\dual#1{\mathord{#1}^{\mathrm{D}}}
\theoremstyle{plain}
\newtheorem{thm}{Theorem}[section]
\newtheorem{prop}[thm]{Proposition}
\newtheorem{lem}[thm]{Lemma}
\theoremstyle{definition}
\newtheorem{example}{Example}[section]
\newtheorem{remark}{Remark}[section]
\theoremstyle{conjecture}
\theoremstyle{problem}
\title{\textbf{Weingarten calculus for matrix ensembles
associated with compact symmetric spaces}}
\author{\textsc{Sho Matsumoto}}
\date{\empty}
\begin{document}

\maketitle

\begin{abstract}
A method for computing integrals of polynomial functions on
compact symmetric spaces is given.
Those integrals are expressed as sums of functions on symmetric groups.
\\

\noindent
\emph{Keywords}: Weingarten calculus, compact symmetric space, matrix elements. \\
Mathematics Subject Classification 2010:
15B52, 
28C10, 
43A85  

\end{abstract}


\section{Introduction}

Let $G$ be 
the unitary group $\mathrm{U}(n)$,
orthogonal group $\mathrm{O}(n)$, 
or (compact) symplectic group $\rmSp(2n)$,
equipped with its Haar probability measure, 
and suppose that $G$ is realized as a matrix group. 
Consider a random matrix $X=(x_{ij})$ picked up from $G$.
Moments of matrix elements 
\begin{equation} \label{eq:moments-G}
\begin{array}{rl}
\bE[ x_{i_1 j_1} \cdots x_{i_k j_k} \overline{x_{i_1' j_1'} \cdots x_{i_l' j_l'}}]
\qquad & \text{for $G=\mathrm{U}(n)$},  \\
\bE[ x_{i_1 j_1} \cdots x_{i_k j_k} ]
\qquad & \text{for $G=\mathrm{O}(n)$ or $\mathrm{Sp}(2n)$} 
\end{array}
\end{equation}
have been one of main interesting objects in random matrix theory.
Weingarten \cite{Weingarten} tried to understand the moments \eqref{eq:moments-G}
for large $n$.
Collins \cite{Collins} established a method of computations of \eqref{eq:moments-G}
for $\mathrm{U}(n)$ with all finite $n$ and
called his method the {\it Weingarten calculus}.
The moments are expressed as sums in terms of  
class functions $\mathrm{Wg}^{\mathrm{U}}(\cdot;n)$
on the symmetric group $S_{k}$.
The function $\mathrm{Wg}^{\mathrm{U}}(\cdot;n)$, 
called the unitary Weingarten function,
has rich combinatorial structures involving Jucys-Murphy elements \cite{MN,Novak}.
The Weingarten calculus for $\mathrm{O}(n)$ was constructed in \cite{CollinsSniady,
CM}, and  the corresponding Weingarten functions,
called orthogonal Weingarten functions, 
are $H_k$-biinvariant functions on $S_{2k}$, where
$H_k$ is the hyperoctahedral subgroup of $S_{2k}$.
The orthogonal Weingarten function also bears
nice combinatorics \cite{M-JMOrtho,ZJ}.
The Weingarten calculus for $\mathrm{Sp}(2n)$ was mentioned in \cite{CollinsSniady,
CollinsStolz}. See also a recent preprint \cite{Dah}, in which
Brownian motions on classical groups are studied.
In the last decade,
Weingarten calculus has been widely applied: 
Harish-Chandra--Itzykson--Zuber integrals \cite{Collins, GGJ},
quantum information theory \cite{CN, CFN}, 
designs \cite{Scott}, and statistics \cite{CMS}.

In his pioneering works for statistical mechanics,
Dyson \cite{Dyson1, Dyson2, Dyson3} introduced three important classes of random unitary matrices
as modifications of Gaussian matrix ensembles.
These unitary matrix ensembles are known as
{\it circular orthogonal/unitary/symplectic ensembles} (COE/CUE/CSE). 
Whereas the CUE is nothing but the unitary group equipped with the Haar measure,
the COE and CSE are not Lie groups but compact symmetric spaces.

Generally, a classical compact symmetric space is of the form $G/K$, where
$G$ is a unitary, orthogonal, or symplectic group, and $K$ is a closed subgroup of $G$ consisting of
fixed-points of an involution $\Omega$ on $G$. 
Let $\mcal{S}$ be the image of the map $G \ni g \mapsto \Omega(g)^{-1} g$.
We thus obtain a matrix realization of $G/K$:
$$
G/K \simeq \mcal{S}; \qquad G \ni g \mapsto \Omega(g)^{-1} g \in \mcal{S},
\qquad K= \{ h \in G \ | \ \Omega(h)=h\}.
$$
Moreover,
a $G$-invariant probability measure on $\mcal{S}$ is induced from 
the Haar measure on $G$ via this map.
Then the matrix space $\mcal{S}$ equipped with the probability measure 
is the matrix ensemble associated with $G/K$.
In the case of the COE, $G/K=\rmU(n)/\rmO(n)$,
$\Omega(g)=\overline{g}$ (the complex-conjugation of $g$),
and $\mcal{S}$ is the set of $n \times n$ symmetric unitary matrices.
Cartan  \cite{Cartan} classified classical compact symmetric spaces into 
seven infinite series labeled by A~I, A~II, A~III, BD~I, C~I, C~II,  and D~III
(see Figure \ref{ListCSS}).
Thus we obtain seven series of random matrix ensembles
associated with these compact symmetric spaces. 
Eigenvalue distributions for them have been studied, 
see, e.g., \cite{Du}, \cite{CaM}, and \cite[\S 3.7.2 and 3.7.3]{F}.
In contrast, distributions for those matrix elements were not studied so much.
Ones can see a small work for only classes A~I and A~II in
\cite[\S 2.3.2]{F}.


In this article,
we will construct Weingarten calculus for random matrices
associated with each class $\mcal{C}=$ A~I, A~II,... in Figure \ref{ListCSS}.
A development in a similar direction
can be seen in \cite{CollinsStolz}, but 
only asymptotic behaviors of two-degree moments were observed in that 
article.
As in the case of classical groups $\rmU(n), \rmO(n),\rmSp(2n)$,
we will express the moments of random matrix elements as 
sums over symmetric groups.
To do it,
we need a distinguishing  Weingarten function $\mathrm{Wg}^{\mcal{C}}$
for each $\mcal{C}$.

This article is organized as follows.
In Section 2, we review the Weingarten calculus for $\rmU(n)$, $\rmO(n)$, and 
$\rmSp(2n)$, developed in \cite{Collins, CM, CollinsSniady, CollinsStolz}.
Especially, the case for symplectic groups is discussed in detail.
It is described in \cite{CollinsStolz, Dah},
however, unlike their descriptions, we introduce the symplectic Weingarten function 
in terms of twisted zonal spherical functions 
for the twisted  Gelfand pair $(S_{2k},H_k,\epsilon|_{H_k})$.
In Section 3, we discuss the Weingarten calculus for random matrices from 
class A~I and A~II,
which are the 
COE and CSE, respectively. 
The COE case ($\mcal{C}=$ A I) is already given in \cite{M-COE, M-COE-single} in detail
but we present it here again for readers' convenience.
In Section 4 and 5, we discuss the Weingarten calculus 
for chiral ensembles (A~III, BD~I, C~II)
and Bogoliubov-de~Gennes (BdG) ensembles (D~III, C~I),
respectively. 
In the final section, we give a short conclusion for new  Weingarten functions.

\begin{figure} \centering
 \begin{tabular}{|l|l|l|} \hline
 Class $\mcal{C}$ & Symmetric spaces & Random matrices \\ \hline
 A I & $\mathrm{U}(n)/\mathrm{O}(n)$ & circular orthogonal ensemble (COE)  \\ 
 A II & $\mathrm{U}(2n)/\mathrm{Sp}(2n)$ & circular symplectic ensemble (CSE) \\ \hline
 A III & $\mathrm{U}(n)/\mathrm{U}(a) \times \mathrm{U}(b) \quad (a+b=n)$ &  \\
 BD I & $\mathrm{O}(n)/\mathrm{O}(a) \times \mathrm{O}(b) \quad (a+b=n)$ & chiral ensemble \\
  C II & $\mathrm{Sp}(2n)/\mathrm{Sp}(2a) \times \mathrm{Sp}(2b) \quad (a+b=n)$ &  \\
 \hline
  D III & $\mathrm{O}(2n)/\mathrm{U}(n) $ & Bogoliubov-de Gennes (BdG) ensemble \\
  C I & $\mathrm{Sp}(2n)/\mathrm{U}(n) $ &  \\
  \hline
 \end{tabular}
 \caption{{\bf List of classical compact symmetric spaces.}
 Remark that we consider the full groups $\rmU(n)$ and $\rmO(n)$
rather than special ones $\mathrm{SU}(n)$ and $\mathrm{SO}(n)$,
the latter groups of which are usually used in Lie theory.
 }
 \label{ListCSS}
\end{figure}

\section{Weingarten calculus for classical groups}

\subsection{Weingarten calculus for unitary groups}

In this subsection, we review the Weingarten calculus for unitary groups.
See \cite{Collins,CollinsSniady,CMS} for details.

\subsubsection{Partitions and cycle-types}

A partition $\lambda=(\lambda_1,\lambda_2,\dots,\lambda_l)$
of a positive integer $k$ is 
a weakly decreasing sequence of positive integers with
$|\lambda|=\sum_{i=1}^l \lambda_i=k$.
We write  $\ell(\lambda)$ for the length $l$ of $\lambda$.
When $\lambda$ is a partition of $k$, we write  $\lambda \vdash k$.

Let $S_k$ be the symmetric group acting on $[k]=\{1,2,\dots,k\}$.
A permutation $\sigma \in S_k$ is usually expressed in the two-array notation
$\(\begin{smallmatrix} 1 & 2 & \cdots & k \\ \sigma(1) & \sigma(2) & \cdots &
\sigma(k) \end{smallmatrix}\)$,
but  transpositions are often written as $(i \ j)$ shortly.
Denote by $\mathrm{id}_k$ the identity permutation in $S_k$.
If a permutation $\sigma$ in $S_k$ is decomposed into disjoint cycles with lengths
$\mu_1 \ge \mu_2 \ge \cdots \ge \mu_l$, then 
the partition $\mu=(\mu_1,\mu_2,\dots,\mu_l)$ of $k$ is called 
the {\it cycle-type} of $\sigma$. 
For example,
the cycle-type of the permutation
$\( \begin{smallmatrix} 1 & 2 & 3 & 4 & 5 & 6 \\
3 & 1 & 2 & 4 & 6 & 5 \end{smallmatrix}\)$ in $S_6$ 
is $(3,2,1)$.

\subsubsection{Unitary Weingarten functions}

Let $L(S_k)$ be the algebra of complex-valued functions on $S_k$ with convolution
$$
(f_1*f_2)(\sigma)= \sum_{\tau \in S_k} f_1(\tau) f_2(\tau^{-1} \sigma) 
\qquad (f_1,f_2 \in L(S_k), \ \sigma \in S_k).
$$
The identity element in the algebra $L(S_k)$ is
 the Dirac function $\delta_{\mathrm{id}_k}$.

Let $\mcal{Z}(L(S_k))$ be the center of $L(S_k)$:
$\mcal{Z}(L(S_k))=\{ h \in L(S_k) \ | \ h * f =f*h \ (f \in L(S_k))\}$.
For a complex number $z$, we define the element $T^{\mathrm{U}}(\cdot ;z)$ in
$\mcal{Z}(L(S_k))$ by
$$
T^{\mathrm{U}}(\sigma;z)= z^{\ell(\mu)}\qquad (\sigma \in S_k),
$$
where $\mu$ is the cycle-type of $\sigma$.
Note that the upper index $\mathrm{U}$ stands for the unitary group.
The class function $T^{\mathrm{U}}(\cdot;z)$ can be expanded
in terms of irreducible characters $\chi^\lambda$ of $S_k$ as follows:
$$
T^{\mathrm{U}}(\cdot;z)=
\frac{1}{k!} \sum_{\lambda \vdash k} f^\lambda C_\lambda(z) \chi^\lambda,
$$
where $f^\lambda:=\chi^\lambda (\mathrm{id}_k)$ 
is the dimension of the irreducible representation associated with $\lambda$ and 
$C_\lambda(z)$ is the polynomial in $z$ given by
$$
C_\lambda(z)= 
\prod_{(i,j) \in \lambda}  (z+j-i).
$$
Here $(i,j) \in \lambda$ stands for 
$1 \le i \le \ell(\lambda), \, 1 \le j \le \lambda_i$.
In other words, the $(i,j)$ are coordinates of the Young diagram of $\lambda$.

The {\it unitary Weingarten function} $\mathrm{Wg}^{\mathrm{U}}(\cdot;z)$ on $S_k$ is 
defined by
\begin{equation} \label{def-uWg}
\mathrm{Wg}^{\mathrm{U}}(\cdot;z) = \frac{1}{k!}
\sum_{\begin{subarray}{c} \lambda \vdash k \\
C_\lambda(z) \not=0 \end{subarray}}
\frac{f^\lambda}{C_\lambda(z)} \chi^\lambda,
\end{equation}
summed over all partitions $\lambda$ of $k$ with $C_\lambda(z) \not=0$. 
It is the pseudo-inverse element of $T^{\mathrm{U}}(\cdot;z)$, i.e., the unique element in $\mcal{Z}(L(S_k))$ satisfying
$$
T^{\mathrm{U}}(\cdot;z)* \mathrm{Wg}^{\mathrm{U}}(\cdot;z)* T^{\mathrm{U}}(\cdot;z) =T^{\mathrm{U}}(\cdot;z).
$$
In particular, unless $z \in \{0,\pm 1, \pm 2, \dots, \pm(k-1)\}$,
functions $T^{\mathrm{U}}(\cdot;z)$ and 
$\mathrm{Wg}^{\mathrm{U}}(\cdot;z)$ are inverse of each other and 
satisfy $T^{\mathrm{U}}(\cdot;z)* \mathrm{Wg}^{\mathrm{U}}(\cdot;z)=\delta_{\mathrm{id}_k}$.

\subsubsection{Integrals on unitary groups}

The unitary group $\mathrm{U}(n)$ is
$\mathrm{U}(n) =\{ U \in \mathrm{GL}(n,\mathbb{C}) \ | \ 
U U^* = I_n\}$,
which has the Haar probability measure.
Here $U^*:=\overline{\trans{U}}$ is the adjoint matrix of $U$
and $I_n=(\delta_{ij})_{1 \le i,j \le n}$ is the $n \times n$ identity matrix.
We will also write $U$ for a random element of $\rmU(n)$, with distribution given by Haar measure.

\begin{thm}[Weingarten calculus for unitary groups \cite{Collins, CollinsSniady}]
\label{thm:WgCal-Unitary}
Let $U=(u_{ij})_{1 \le i,j \le n}$ be an $n \times n$ Haar 
unitary matrix.
For four sequences $\bm{i}=(i_1,\dots,i_k)$, $\bm{j}=(j_1,\dots,j_k)$,
$\bm{i}'=(i_1',\dots,i_k')$, $\bm{j}'=(j_1',\dots,j_k')$
of positive integers in $[n]$, we have
$$
\bE[ u_{i_1 j_1} \cdots u_{i_k j_k}
\overline{ u_{i_1'j_1'} \cdots u_{i_k' j_k'}}]
= \sum_{\sigma, \tau \in S_k} \delta_\sigma (\bm{i},\bm{i}') \delta_\tau
(\bm{j},\bm{j}') \mathrm{Wg}^{\mathrm{U}}(\sigma^{-1}\tau;n).
$$
Here $\delta_{\sigma}(\bm{i},\bm{i}')$ is defined by
\begin{equation} \label{eq:delta-ft}
\delta_{\sigma}(\bm{i},\bm{i}') = \prod_{s=1}^k \delta_{i_{\sigma(s)},i'_{s}}.
\end{equation}
If $k \not=l$, then
$\bE[ u_{i_1 j_1} \cdots u_{i_k j_k}
\overline{ u_{i_1'j_1'} \cdots u_{i_l' j_l'}}]$ vanishes
for any indices $i_1,\dots,i_k$, $j_1,\dots,j_k$, $i_1',\dots,i'_l$,
$j_1',\dots,j_l'$.
\end{thm}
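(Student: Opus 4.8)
The plan is to realize the left-hand side as a single matrix entry of an orthogonal projection onto a space of invariant tensors, and then to identify that projection via Schur--Weyl duality. Write $V=\bC^n$ with standard basis $e_1,\dots,e_n$, and for a sequence $\bm{j}=(j_1,\dots,j_k)$ put $e_{\bm{j}}=e_{j_1}\otimes\cdots\otimes e_{j_k}\in V^{\otimes k}$. For $\pi\in S_k$ let $\rho(\pi)$ be the operator permuting tensor legs, $\rho(\pi)e_{\bm{j}}=e_{j_{\pi^{-1}(1)}}\otimes\cdots\otimes e_{j_{\pi^{-1}(k)}}$. The first thing I would record is that, when $k=l$, the product $u_{i_1 j_1}\cdots u_{i_k j_k}\,\overline{u_{i_1'j_1'}\cdots u_{i_k'j_k'}}$ is exactly the $(\bm{i},\bm{i}')$ entry of $U^{\otimes k}\,|e_{\bm{j}}\rangle\langle e_{\bm{j}'}|\,(U^{\otimes k})^{*}$; taking expectations, the integral becomes $\langle e_{\bm{i}},\,\Phi(|e_{\bm{j}}\rangle\langle e_{\bm{j}'}|)\,e_{\bm{i}'}\rangle$, where $\Phi(A)=\bE[\,U^{\otimes k}A\,(U^{\otimes k})^{*}]$.

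For the vanishing claim I would argue first, since it is cheapest. The central circle $\{\zeta I_n:|\zeta|=1\}$ lies in $\rmU(n)$, so if $U$ is Haar distributed then so is $\zeta U$; substituting $\zeta U$ for $U$ multiplies the integrand by $\zeta^{k}\overline{\zeta}^{\,l}=\zeta^{k-l}$. Invariance of the expectation under this substitution forces $(\zeta^{k-l}-1)\,\bE[\cdots]=0$ for every $\zeta$ on the unit circle, and choosing $\zeta$ with $\zeta^{k-l}\neq1$ gives the vanishing whenever $k\neq l$.

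For $k=l$ the heart of the matter is to identify $\Phi$. Because $U\mapsto U^{\otimes k}A(U^{\otimes k})^{*}$ is the adjoint action, averaging over the group makes $\Phi$ the orthogonal projection, with respect to the Hilbert--Schmidt inner product, from $\mathrm{End}(V^{\otimes k})$ onto the commutant $\mathrm{End}_{\rmU(n)}(V^{\otimes k})$. By the classical Schur--Weyl duality this commutant is spanned by the permutation operators $\{\rho(\pi):\pi\in S_k\}$. Their Gram matrix is $\langle\rho(\sigma),\rho(\tau)\rangle_{\mathrm{HS}}=\Tr\rho(\sigma^{-1}\tau)=n^{\ell(\mu)}=T^{\mathrm{U}}(\sigma^{-1}\tau;n)$, where $\mu$ is the cycle-type of $\sigma^{-1}\tau$; this is exactly the matrix $M_T$ with entries $M_T(\sigma,\tau)=T^{\mathrm{U}}(\sigma^{-1}\tau;n)$. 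Writing the projection of $|e_{\bm{j}}\rangle\langle e_{\bm{j}'}|$ onto this (possibly linearly dependent) spanning set through the standard projection formula and computing the two kinds of inner products $\langle\rho(\sigma),|e_{\bm{j}}\rangle\langle e_{\bm{j}'}|\rangle_{\mathrm{HS}}=\delta_\sigma(\bm{j},\bm{j}')$ and $\langle e_{\bm{i}},\rho(\tau)e_{\bm{i}'}\rangle=\delta_\tau(\bm{i},\bm{i}')$, both instances of \eqref{eq:delta-ft}, I would obtain $\bE[\cdots]=\sum_{\sigma,\tau}M_T^{+}(\sigma,\tau)\,\delta_\sigma(\bm{j},\bm{j}')\,\delta_\tau(\bm{i},\bm{i}')$, where $M_T^{+}$ is the Moore--Penrose pseudo-inverse of the Gram matrix.

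The last step, and the one I expect to be the genuine obstacle, is to match $M_T^{+}$ with the Weingarten function. The map $f\mapsto M_f$ with $M_f(\sigma,\tau)=f(\sigma^{-1}\tau)$ is an algebra homomorphism from $(L(S_k),*)$ to matrices, so the pseudo-inverse relation $T^{\mathrm{U}}*\Wg^{\mathrm{U}}*T^{\mathrm{U}}=T^{\mathrm{U}}$ gives $M_T M_{\Wg} M_T=M_T$; since $T^{\mathrm{U}}$ and $\Wg^{\mathrm{U}}$ are real class functions (hence invariant under inversion, as $\pi$ and $\pi^{-1}$ always share a cycle-type), the matrices $M_T$ and $M_{\Wg}$ are symmetric and commute, which upgrades this to the full Moore--Penrose identities and yields $M_T^{+}=M_{\Wg}$, i.e. $M_T^{+}(\sigma,\tau)=\Wg^{\mathrm{U}}(\sigma^{-1}\tau;n)$. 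The delicate point is precisely that for $n<k$ the operators $\rho(\pi)$ are linearly dependent, so $M_T$ is singular and one is forced to use the pseudo-inverse rather than a true inverse; this is exactly why \eqref{def-uWg} restricts the spectral sum to partitions with $C_\lambda(z)\neq0$. Substituting and relabelling $\sigma\leftrightarrow\tau$, using once more that $\Wg^{\mathrm{U}}$ is a class function, reproduces the stated formula.
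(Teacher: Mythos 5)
Your argument is correct, and it is essentially the standard route: the paper states Theorem~\ref{thm:WgCal-Unitary} without proof (citing Collins and Collins--\'{S}niady), but its own proof of the symplectic analogue, Theorem~\ref{thm:WgSp} via Lemma~\ref{lem:WgSp}, runs exactly along your lines --- average to obtain a projection onto the invariants, expand over the spanning set furnished by duality (there, the First Fundamental Theorem; here, Schur--Weyl), compute the Gram matrix with entries $T(\sigma^{-1}\tau)$, and identify its pseudo-inverse with the Weingarten matrix using the fact that $f\mapsto\bigl(f(\sigma^{-1}\tau)\bigr)_{\sigma,\tau}$ intertwines convolution with matrix multiplication. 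Your reduction of the moment to $\langle e_{\bm{i}},\Phi(|e_{\bm{j}}\rangle\langle e_{\bm{j}'}|)e_{\bm{i}'}\rangle$, the centre-circle argument for $k\neq l$, and the two inner-product computations giving the $\delta$-factors are all sound.

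The one step stated incorrectly is the ``upgrade'' to the full Moore--Penrose identities: $M_TM_{\Wg}M_T=M_T$ together with symmetry and commutation does \emph{not} imply $M_{\Wg}M_TM_{\Wg}=M_{\Wg}$. For a counterexample take $M_{\Wg}=M_T^{+}+Q$ with $Q$ the orthogonal projection onto $\ker M_T$: then $Q$ is symmetric, commutes with $M_T$, $M_TQ=0$, so the first Penrose identity and the symmetry of both products survive, yet the second identity fails; inside $\mcal{Z}(L(S_k))$ the same defect occurs by adding $\frac{f^\lambda}{k!}\chi^\lambda$ for a $\lambda$ with $C_\lambda(n)=0$. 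The repair is cheap, in either of two ways. First, the missing identity $\Wg^{\mathrm{U}}*T^{\mathrm{U}}*\Wg^{\mathrm{U}}=\Wg^{\mathrm{U}}$ follows directly from \eqref{def-uWg} and $\chi^\lambda*\chi^\mu=\delta_{\lambda\mu}\frac{k!}{f^\lambda}\chi^\lambda$: the restriction $C_\lambda(n)\neq0$ in \eqref{def-uWg} makes the Fourier blocks of $\Wg^{\mathrm{U}}$ vanish exactly where those of $T^{\mathrm{U}}$ do, so all four Penrose identities hold and $M_{\Wg}=M_T^{+}$ by uniqueness of the Moore--Penrose inverse. Second, and more simply, you never needed Moore--Penrose at all: if $V$ denotes the map whose columns are the operators $\rho(\sigma)$, $\sigma\in S_k$, and $G=V^{*}V$ is the Gram matrix, then for \emph{any} $G'$ satisfying $GG'G=G$ the operator $P=VG'V^{*}$ is the orthogonal projection onto $\mathrm{ran}\,V$ --- the columns of $VG'G-V$ lie in $\mathrm{ran}\,V$ and are annihilated by $V^{*}$, hence vanish, so $PV=V$, while $P$ kills $(\mathrm{ran}\,V)^{\perp}$. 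Thus the single relation $T^{\mathrm{U}}*\Wg^{\mathrm{U}}*T^{\mathrm{U}}=T^{\mathrm{U}}$ already licenses inserting $M_{\Wg}$ into your projection formula, and the rest of your argument goes through unchanged.
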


\begin{example}
$\mathrm{Wg}^{\mathrm{U}} ( \mathrm{id}_1;n)= \frac{1}{n}$;
$$
\mathrm{Wg}^{\mathrm{U}} ( \mathrm{id}_2;n)= \frac{1}{(n+1)(n-1)};
\qquad 
\mathrm{Wg}^{\mathrm{U}} ( (1 \ 2) ;n)= \frac{-1}{n(n+1)(n-1)}.
$$
We can see more examples in \cite{Collins}.
\end{example}

\begin{remark}
In the sum of \eqref{def-uWg},
we can drop the restriction for $\lambda$.
Even if we replace the definition by 
$$
\mathrm{Wg}^{\mathrm{U}}(\cdot;z) = \frac{1}{k!}
\sum_{\lambda \vdash k}
\frac{f^\lambda}{C_\lambda(z)} \chi^\lambda,
$$
which is a rational function in $z \in \bC$ with finitely many poles,
then Theorem \ref{thm:WgCal-Unitary} remains true
after cancellations of poles.
This enables us to ignore the restriction $C_\lambda(z) \not=0$ in latter sections.
See \cite[Proposition 2.5]{CollinsSniady} for details.
\end{remark}

\subsection{Weingarten calculus for orthogonal groups}

In this subsection, we review the Weingarten calculus for orthogonal groups,
developed in 
\cite{CollinsSniady, CM,CMS}.
The theory of zonal spherical functions for finite Gelfand pairs are seen in
\cite[Chapter~VII]{Mac}.

\subsubsection{Hyperoctahedral groups and coset-types}

Let $H_k$ be the hyperoctahedral subgroup of $S_{2k}$ with order $2^k k!$,
generated by
adjacent transpositions $(2i-1 \ 2i)$ $(1 \le i \le k)$ and 
double transpositions
$(2i-1 \ 2j-1)(2i \ 2j)$  $(1 \le i<j \le k)$.
Let $M_{2k}$ be the subset of permutations $\sigma \in S_{2k}$ satisfying
$$
\sigma(2i-1)< \sigma(2i) \ (1 \le i \le k) \qquad 
\text{and} \qquad \sigma(1)< \sigma(3)< \cdots < \sigma(2k-1).
$$ 
The elements $\sigma$ in $M_{2k}$ form the complete set of the representatives of  $S_{2k}/H_k$ and 
are sometimes identified with 
perfect matchings of the forms
$$
\{ \{\sigma(1),\sigma(2)\}, \{\sigma(3), \sigma(4)\},\dots,\{
\sigma(2k-1), \sigma(2k)\}\}
$$
on $[2k]$.

For a permutation $\sigma\in S_{2k}$, we define the graph 
$\Gamma(\sigma)$ as follows.
The vertex set  is $\{1,2,\dots,2k\}$
and the edge set consists of red edges $\{2r-1, 2r\}$
and blue edges $\{\sigma(2r-1),\sigma(2r)\}$, where $r$ runs over $1,2,\dots,k$.
Then all connected components of the graph $\Gamma(\sigma)$ 
have even vertices with numbers $2\mu_1 \ge 2\mu_2 \ge \dots \ge 2\mu_l$.
We call the partition $\mu:=(\mu_1,\mu_2,\dots,\mu_l) \vdash k$
the {\it coset-type} of $\sigma$. 
For example,
the coset-type of the permutation
$\( \begin{smallmatrix} 1 & 2 & 3 & 4 & 5 & 6 \\
3 & 1 & 2 & 4 & 6 & 5 \end{smallmatrix}\)$ in $S_6$ 
is $(2,1)$.
The coset-type distinguishes with double cosets of $H_k$ in $S_{2k}$.
Specifically,
for two permutations $\sigma,\tau$ in $S_{2k}$,
their coset-types coincides if and only if $H_k \sigma H_k=H_k\tau H_k$
(\cite[VII, (2.1)]{Mac}).

For a partition $\mu=(\mu_1,\mu_2,\dots,\mu_l)$ of $k$,
we define the permutation $\sigma_\mu$ in $S_{2k}$ as follows:
For each $r=1,2,\dots,l$,
\begin{equation} \label{eq:sigma-mu}
\begin{array}{l}
\sigma_\mu \( 2\sum_{i=1}^{r-1} \mu_i +1\) = 2\sum_{i=1}^{r-1} \mu_i +1, \\
\sigma_\mu \( 2\sum_{i=1}^{r-1} \mu_i +2\) = 2\sum_{i=1}^{r-1} \mu_i +2 \mu_r, \\\sigma_\mu \( 2\sum_{i=1}^{r-1} \mu_i +p\) = 2\sum_{i=1}^{r-1} \mu_i +p-1
\quad \text{for $p=3,4,\dots, 2\mu_r$}.
\end{array}
\end{equation}
For example, $\sigma_{(3,1)}=
\( \begin{smallmatrix} 1 & 2 & 3 & 4 & 5 & 6 & 7 & 8 \\
1 & 6 & 2 & 3 & 4 & 5 & 7 & 8 \end{smallmatrix}\)$.  
It is easy to see that the permutation $\sigma_\mu$ belongs to $M_{2k}$ and has 
the coset-type $\mu$ and signature $+1$.
In particular, $\sigma_{(1^k)}=\mathrm{id}_{2k}$, the identity permutation in $S_{2k}$.
We often regard $\sigma_\mu$ as a typical permutation of coset-type $\mu$.

\subsubsection{Orthogonal Weingarten functions and Gelfand pairs}

Let $L(S_{2k},H_k)$ be the subspace of all $H_k$-biinvariant functions in $L(S_{2k})$:
$$
L(S_{2k},H_k) = \{ f \in L(S_{2k}) \ | \ 
f(\zeta \sigma)=f(\sigma \zeta)= f(\sigma) \ (\sigma \in S_{2k}, \ \zeta \in H_k)\}.
$$
It is well known that $(S_{2k},H_k)$ is a {\it Gelfand pair}, i.e., 
$L(S_{2k},H_k)$ is a commutative algebra under the convolution 
(\cite[VII, (1.1) and (2.2)]{Mac}).

For two functions $f_1, f_2$ in $L(S_{2k})$, we define 
a new product $f_1 \star f_2$ by 
$$
(f_1 \star f_2)(\sigma)= \sum_{\tau \in M_{2k}} f_1(\tau) f_2(\tau^{-1} \sigma)
\qquad (\sigma \in S_{2k}).
$$

\begin{example}
For $f_1,f_2 \in L(S_4)$,
\begin{align*}
(f_1 \star f_2) \( \begin{smallmatrix} 1 & 2 & 3 & 4 \\
1 & 2 & 3 & 4 \end{smallmatrix}\)
=& f_1\( \begin{smallmatrix} 1 & 2 & 3 & 4 \\
1 & 2 & 3 & 4 \end{smallmatrix}\)
f_2\( \begin{smallmatrix} 1 & 2 & 3 & 4 \\
1 & 2 & 3 & 4 \end{smallmatrix}\) +
f_1\( \begin{smallmatrix} 1 & 2 & 3 & 4 \\
1 & 3 & 2 & 4 \end{smallmatrix}\)
f_2\( \begin{smallmatrix} 1 & 2 & 3 & 4 \\
1 & 3 & 2 & 4 \end{smallmatrix}\)  + 
f_1\( \begin{smallmatrix} 1 & 2 & 3 & 4 \\
1 & 4 & 2 & 3 \end{smallmatrix}\)
f_2\( \begin{smallmatrix} 1 & 2 & 3 & 4 \\
1 & 3 & 4 & 2 \end{smallmatrix}\). \\
(f_1 \star f_2) \( \begin{smallmatrix} 1 & 2 & 3 & 4 \\
1 & 3 & 2 & 4 \end{smallmatrix}\)
=& f_1\( \begin{smallmatrix} 1 & 2 & 3 & 4 \\
1 & 2 & 3 & 4 \end{smallmatrix}\)
f_2\( \begin{smallmatrix} 1 & 2 & 3 & 4 \\
1 & 3 & 2 & 4 \end{smallmatrix}\) +
f_1\( \begin{smallmatrix} 1 & 2 & 3 & 4 \\
1 & 3 & 2 & 4 \end{smallmatrix}\)
f_2\( \begin{smallmatrix} 1 & 2 & 3 & 4 \\
1 & 2 & 3 & 4 \end{smallmatrix}\)  + 
f_1\( \begin{smallmatrix} 1 & 2 & 3 & 4 \\
1 & 4 & 2 & 3 \end{smallmatrix}\)
f_2\( \begin{smallmatrix} 1 & 2 & 3 & 4 \\
1 & 4 & 3 & 2 \end{smallmatrix}\). \\
(f_1 \star f_2) \( \begin{smallmatrix} 1 & 2 & 3 & 4 \\
1 & 4 & 2 & 3 \end{smallmatrix}\)
=& f_1\( \begin{smallmatrix} 1 & 2 & 3 & 4 \\
1 & 2 & 3 & 4 \end{smallmatrix}\)
f_2\( \begin{smallmatrix} 1 & 2 & 3 & 4 \\
1 & 4 & 2 & 3 \end{smallmatrix}\) +
f_1\( \begin{smallmatrix} 1 & 2 & 3 & 4 \\
1 & 3 & 2 & 4 \end{smallmatrix}\)
f_2\( \begin{smallmatrix} 1 & 2 & 3 & 4 \\
1 & 4 & 3 & 2 \end{smallmatrix}\)  + 
f_1\( \begin{smallmatrix} 1 & 2 & 3 & 4 \\
1 & 4 & 2 & 3 \end{smallmatrix}\)
f_2\( \begin{smallmatrix} 1 & 2 & 3 & 4 \\
1 & 2 & 3 & 4 \end{smallmatrix}\). 
\end{align*}
\end{example}

For $f_1,f_2 \in L(S_{2k},H_k)$, we have $f_1 \star f_2 =
(2^k k!)^{-1} f_1 *f_2\in L(S_{2k},H_k)$.
In fact, since $M_{2k}$ is the complete set of representatives of cosets $\sigma H_k$
in $S_{2k}$ and since $f_1,f_2$ are $H_k$-biinvariant, we have
$$
(f_1 *f_2) (\sigma)= \sum_{\tau \in M_{2k}}
\sum_{\zeta \in H_k} f_1 (\tau \zeta) f_2((\tau \zeta)^{-1} \sigma)=
\sum_{\tau \in M_{2k}}
\sum_{\zeta \in H_k} f_1 (\tau) f_2(\tau^{-1} \sigma) =|H_k| 
(f_1 \star f_2)(\sigma).
$$
Hence $L(S_{2k},H_k)$ is a commutative algebra under 
the product $\star$ with the identity element
$$
\bm{1}_k(\sigma)= \begin{cases}
1 & \text{if $\sigma \in H_k$}, \\
0 & \text{if $\sigma \in S_{2k} \setminus H_k$}.
\end{cases}
$$

For each partition $\lambda=(\lambda_1,\lambda_2,\dots) \vdash k$,
we define the {\it zonal spherical function} $\omega^\lambda$
of the Gelfand pair $(S_{2k},H_k)$ by
$$
\omega^\lambda= (2^k k!)^{-1} \chi^{2\lambda} * \bm{1}_k
$$
with $2\lambda=(2\lambda_1,2\lambda_2,\dots)$.
The $\omega^\lambda$, $\lambda \vdash k$, form a linear basis of $L(S_{2k},H_k)$
and satisfy the orthogonality relation
\begin{equation} \label{eq:ortho-omega}
\omega^\lambda \star \omega^\mu= \delta_{\lambda \mu} \frac{(2k)!}{2^k k!}
\frac{1}{f^{2\lambda}} \omega^\lambda
\end{equation}
(\cite[VII, (1.4)]{Mac}).

For a complex number $z$, 
we define the element $T^{\mathrm{O}}(\sigma;z)$ in $L(S_{2k},H_k)$ by
$$
T^{\mathrm{O}}(\sigma;z)= z^{\ell(\mu)} \qquad (\sigma \in S_{2k}),
$$
where $\mu$ is the coset-type of $\sigma$.
We emphasize that $T^{\mathrm{O}}$ is different from $T^{\mathrm{U}}$.
The function $T^{\mathrm{O}}(\cdot;z)$ is expanded in terms of $\omega^\lambda$
as follows (\cite[(4.5)]{CM}):
$$ 
T^{\mathrm{O}}(\cdot;z)= \frac{2^k k!}{(2k)!} \sum_{\lambda \vdash k }
f^{2\lambda} D_\lambda(z) \omega^\lambda,
$$
where $D_\lambda(z)$ is the polynomial in $z$ given by
$$
D_\lambda(z)= \prod_{(i,j) \in \lambda}
(z+2j-i-1).
$$

The {\it orthogonal Weingarten function} $\mathrm{Wg}^{\mathrm{O}}(\cdot;z)$
on $S_{2k}$ is defined by
$$
\mathrm{Wg}^{\mathrm{O}}(\cdot;z) = \frac{2^k k!}{(2k)!}
\sum_{\begin{subarray}{c} \lambda \vdash k \\ D_\lambda(z) \not=0 \end{subarray}}
\frac{f^{2\lambda}}{D_\lambda(z)} \omega^\lambda,
$$
which is the pseudo-inverse element of $T^{\mathrm{O}}(\cdot;z)$, i.e., the unique element in $L(S_k,H_k)$ satisfying
\begin{equation} \label{eq:TWT-ortho}
T^{\mathrm{O}}(\cdot;z)\star \mathrm{Wg}^{\mathrm{O}}(\cdot;z)\star T^{\mathrm{O}}(\cdot;z) =T^{\mathrm{O}}(\cdot;z).
\end{equation}
In particular, if $D_\lambda(z) \not=0$ for all partitions $\lambda$ of $k$,
functions $T^{\mathrm{O}}(\cdot;z)$ and $ \mathrm{Wg}^{\mathrm{O}}(\cdot;z)$
are their inverse of each other and satisfy 
$T^{\mathrm{O}}(\cdot;z)\star \mathrm{Wg}^{\mathrm{O}}(\cdot;z)=\bm{1}_k$.
Those relations follow from \eqref{eq:ortho-omega} and 
expansions of 
$T^{\mathrm{O}}(\cdot;z)$ and $ \mathrm{Wg}^{\mathrm{O}}(\cdot;z)$
in terms of $\omega^\lambda$.

\subsubsection{Integrals on orthogonal groups}

The (real) orthogonal group is 
$\mathrm{O}(n)=\{ R \in \mathrm{GL}(n,\mathbb{R}) \ | \ 
R \trans{R}=I_n\}$ and has the Haar probability measure.

\begin{thm}[Weingarten calculus for orthogonal groups \cite{CollinsSniady,CM}]
Let $R=(r_{ij})_{1 \le i,j \le n}$ be an $n \times n$ Haar orthogonal
matrix.
For two sequences $\bm{i}=(i_1,\dots,i_{2k})$ and
$\bm{j}=(j_1,\dots,j_{2k})$ of positive integers in $[n]$,
we have
$$
\bE [ r_{i_1 j_1} \cdots r_{i_{2k} j_{2k}}]=
\sum_{\sigma,\tau \in M_{2k}}
\Delta_\sigma(\bm{i}) \Delta_\tau(\bm{j}) \mathrm{Wg}^{\mathrm{O}}(\sigma^{-1}\tau;n).
$$
Here $\Delta_{\sigma}(\bm{i})$ is defined by
\begin{equation} \label{eq:Delta-ft}
\Delta_{\sigma}(\bm{i})= \prod_{s=1}^k \delta_{i_{\sigma(2s-1)}, i_{\sigma(2s)}}.
\end{equation}
Furthermore, $\bE [ r_{i_1 j_1} \cdots r_{i_{2k+1} j_{2k+1}}]=0$
for any $i_1,\dots,i_{2k+1},j_1,\dots,j_{2k+1}$.
\end{thm}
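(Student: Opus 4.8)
The plan is to realize the moment array as the matrix of an orthogonal projection and then compute that projection explicitly. Viewing $R^{\otimes 2k}$ as an operator on $(\bR^n)^{\otimes 2k}$ whose $(\bm{i},\bm{j})$ entry is $r_{i_1 j_1}\cdots r_{i_{2k}j_{2k}}$, the desired expectation is exactly the $(\bm{i},\bm{j})$ entry of $\bE[R^{\otimes 2k}]$. Because $R\mapsto gRh$ preserves the Haar measure for all $g,h\in\rmO(n)$, the operator $\bE[R^{\otimes 2k}]=\int_{\rmO(n)}R^{\otimes 2k}\,dR$ is, by the standard Haar-averaging lemma, the orthogonal projection $P$ onto the $\rmO(n)$-invariant subspace $\mathrm{Inv}:=\bigl((\bR^n)^{\otimes 2k}\bigr)^{\rmO(n)}$. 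First I would record this reduction. For the odd-degree claim I would instead use that $-I_n\in\rmO(n)$ and that $R\mapsto -R$ preserves the Haar measure, so any odd-degree moment equals its own negative and hence vanishes; equivalently, $\mathrm{Inv}$ is trivial in odd tensor degree.

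Next I would invoke the First Fundamental Theorem of invariant theory for $\rmO(n)$ (Schur--Weyl duality): $\mathrm{Inv}$ is spanned by the ``matching vectors'' $v_\sigma$ with $(v_\sigma)_{\bm{i}}=\Delta_\sigma(\bm{i})$, indexed by $\sigma\in M_{2k}$. The core computation is then the Gram matrix of these vectors: summing over $\bm{i}\in[n]^{2k}$, the value $\langle v_\sigma,v_\tau\rangle=\sum_{\bm{i}}\Delta_\sigma(\bm{i})\Delta_\tau(\bm{i})$ counts the labelings of $[2k]$ by $[n]$ consistent with both pairings, which is $n^{\ell(\mu)}$, where $\mu$ is the coset-type of $\sigma^{-1}\tau$. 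Hence the Gram matrix is precisely $\bigl(T^{\mathrm{O}}(\sigma^{-1}\tau;n)\bigr)_{\sigma,\tau\in M_{2k}}$.

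The remaining step is to express $P$ in this spanning set and recognise the coefficients as $\mathrm{Wg}^{\mathrm{O}}$. Writing $P=\sum_{\sigma,\tau}c_{\sigma\tau}\,v_\sigma v_\tau^{*}$ and imposing $Pv_\alpha=v_\alpha$ for each $\alpha$ yields the linear system $\sum_\tau c_{\sigma\tau}\,T^{\mathrm{O}}(\tau^{-1}\alpha;n)=\delta_{\sigma\alpha}$, so the coefficient matrix is the (pseudo-)inverse of the Gram matrix. To match this with $\mathrm{Wg}^{\mathrm{O}}$ I would verify that $f\mapsto[f]$, with $[f]_{\sigma\tau}:=f(\sigma^{-1}\tau)$, is an algebra homomorphism from $(L(S_{2k},H_k),\star)$ into $M_{2k}\times M_{2k}$ matrices: using that $M_{2k}$ represents $S_{2k}/H_k$ together with the $H_k$-biinvariance, one obtains $\sum_{\eta\in M_{2k}}f(\sigma^{-1}\eta)g(\eta^{-1}\tau)=(f\star g)(\sigma^{-1}\tau)$, while $[\bm{1}_k]=I$ since $\sigma^{-1}\tau\in H_k$ forces $\sigma=\tau$ in $M_{2k}$. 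The defining relation $T^{\mathrm{O}}\star\mathrm{Wg}^{\mathrm{O}}\star T^{\mathrm{O}}=T^{\mathrm{O}}$ then transports to $[T^{\mathrm{O}}][\mathrm{Wg}^{\mathrm{O}}][T^{\mathrm{O}}]=[T^{\mathrm{O}}]$, identifying $[\mathrm{Wg}^{\mathrm{O}}]$ with the required pseudo-inverse and giving $c_{\sigma\tau}=\mathrm{Wg}^{\mathrm{O}}(\sigma^{-1}\tau;n)$.

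I expect the main obstacle to be the case of small $n$ (specifically $n<2k$), where the matching vectors $v_\sigma$ become linearly dependent and the Gram matrix $[T^{\mathrm{O}}]$ is singular. Then $Pv_\alpha=v_\alpha$ no longer forces $c=[T^{\mathrm{O}}]^{-1}$ uniquely, and one must argue that the Moore--Penrose pseudo-inverse is the correct choice and that it coincides with $\mathrm{Wg}^{\mathrm{O}}$ as defined. This is precisely where the $\omega^\lambda$-expansion is essential: the $\omega^\lambda$ diagonalize $\star$ by \eqref{eq:ortho-omega}, the eigenvalues of $[T^{\mathrm{O}}]$ are governed by the $D_\lambda(n)$, and $\mathrm{Wg}^{\mathrm{O}}$ inverts exactly the nonzero ones while annihilating the kernel --- which is the pseudo-inverse singled out by $T^{\mathrm{O}}\star\mathrm{Wg}^{\mathrm{O}}\star T^{\mathrm{O}}=T^{\mathrm{O}}$ together with membership in $L(S_{2k},H_k)$.
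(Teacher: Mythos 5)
Your proposal is correct and follows essentially the same route as the paper: the paper cites \cite{CollinsSniady,CM} for this theorem but executes precisely this argument for the symplectic analogue in its proof of Theorem \ref{thm:WgSp} --- Haar averaging to the projection onto the invariant subspace, the First Fundamental Theorem supplying the matching vectors, the Gram matrix identified as $\bigl(T^{\mathrm{O}}(\sigma^{-1}\tau;n)\bigr)_{\sigma,\tau\in M_{2k}}$, and transport of the relation \eqref{eq:TWT-ortho} through the homomorphism $f\mapsto\bigl(f(\sigma^{-1}\tau)\bigr)_{\sigma,\tau\in M_{2k}}$ to identify the pseudo-inverse with $\mathrm{Wg}^{\mathrm{O}}$. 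Your explicit handling of the degenerate case $n<2k$ via the $\omega^\lambda$-diagonalization is exactly the mechanism the paper invokes implicitly through its definition of the pseudo-inverse.
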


\begin{example} \label{example:WgO}
$\mathrm{Wg}^{\mathrm{O}}(\mathrm{id}_2;n)= \frac{1}{n}$;
$$
\mathrm{Wg}^{\mathrm{O}}(\mathrm{id}_4;n)= \frac{n+1}{n(n+2)(n-1)},
\qquad 
\mathrm{Wg}^{\mathrm{O}}(\sigma_{(2)};n)= \frac{-1}{n(n+2)(n-1)}.
$$
We can see more examples in \cite{CollinsSniady,CM}.
\end{example}

\subsection{Weingarten calculus for symplectic groups}

In this subsection, we give the Weingarten calculus for 
symplectic groups. 
It was given in \cite{CollinsStolz,Dah}, however,
unlike their descriptions, we employ 
the theory of twisted Gelfand pairs (\cite[VII, Examples 1-10, 1-11, 2-6, and 2-7]{Mac}).

\subsubsection{Twisted Gelfand pairs}

Let $\epsilon$ be the signature function on $S_{2k}$ and 
consider the linear space
$$
L^\epsilon(S_{2k},H_k)=\{f \in L(S_{2k}) \ | \ 
f(\zeta \sigma)= f(\sigma \zeta)= \epsilon(\zeta) f(\sigma)
\quad (\sigma \in S_{2k}, \ \zeta \in H_k)\}.
$$
The space $L^\epsilon(S_{2k},H_k)$ is closed under the 
convolution $*$,
and it becomes a $\bC$-algebra.
Furthermore, it is known that
$L^\epsilon(S_{2k},H_k)$ is a commutative algebra,
which means that the triple $(S_{2k},H_k,\epsilon|_{H_k})$ is, 
by definition,
a {\it twisted Gelfand pair}
(\cite[VII, Example 2-6]{Mac}).
It is immediate to see that,
if $f_1,f_2 \in L^\epsilon(S_{2k},H_k)$, then 
$f_1 \star f_2 = (2^k k!)^{-1} f_1 *f_2 \in L^\epsilon(S_{2k},H_k)$.
Thus, $L^\epsilon(S_{2k},H_k)$ is a commutative algebra under the product 
$\star$ with the identity element
$$
\bm{1}_k^\epsilon (\sigma)= \begin{cases}
\epsilon(\sigma) & \text{if $\sigma \in H_k$}, \\
0 & \text{if $\sigma \in S_{2k} \setminus H_k$}.
\end{cases}
$$

For each partition $\lambda =(\lambda_1,\lambda_2,\dots) \vdash k$, 
we define
the {\it twisted spherical function} $\pi^\lambda$ 
of the twisted Gelfand pair by
$$
\pi^\lambda= (2^k k!)^{-1} \chi^{\lambda \cup \lambda} * \bm{1}_k^\epsilon
$$
with $\lambda \cup \lambda=(\lambda_1,\lambda_1,\lambda_2,\lambda_2,\dots)$.
The map 
$$
L(S_{2k}, H_k) \to L^\epsilon(S_{2k},H_k): f \mapsto  f^\epsilon, \qquad
f^\epsilon(\sigma)=\epsilon(\sigma) f(\sigma) \ (\sigma \in S_{2k})
$$
defines a $\mathbb{C}$-algebra isomorphism.
The twisted spherical function $\pi^\lambda$ is the
image of $\omega^{\lambda'}$ with  
$\pi^\lambda(\sigma_\mu)=\omega^{\lambda'}(\sigma_\mu)$.
Here $\lambda'=(\lambda_1',\lambda_2',\dots)$ 
is the conjugate partition of $\lambda$,
which is characterized by 
$(i,j) \in \lambda \Leftrightarrow
(j,i) \in \lambda'$.
Hence, the $\{ \pi^\lambda \ | \ \lambda \vdash k\}$
form a linear basis of $L^\epsilon(S_{2k},H_k)$ and satisfy
the orthogonality relation
 $$
 \pi^\lambda \star \pi^\mu= \delta_{\lambda \mu}
\frac{(2k)!}{2^k k!} \frac{1}{f^{\lambda \cup \lambda}} \pi^\lambda.
$$

\subsubsection{Symplectic Weingarten functions}

Let $z$ be a complex number and 
consider the function $T^{\mathrm{Sp}}(\cdot;z)$ in $L^\epsilon(S_{2k},H_k)$ defined by
$$
T^{\mathrm{Sp}}(\sigma;z)= (-1)^{k} \epsilon(\sigma) (-2z)^{\ell(\mu)}\qquad
(\sigma \in S_{2k}),
$$
where $\mu$ is the coset-type of $\sigma$.
Let
$$
D'_\lambda(z)= \prod_{(i,j) \in \lambda} (2z-2i+j+1).
$$
The expansion of $T^{\mathrm{Sp}}(\sigma;z)$ in terms of 
linear basis $\pi^\lambda$ is given as follows.

\begin{lem}
\begin{equation} \label{eq:T-pi}
T^{\mathrm{Sp}}(\cdot;z)= \frac{2^k k!}{(2k)!} \sum_{\lambda \vdash k}
f^{\lambda \cup \lambda} D'_\lambda(z) \pi^\lambda.
\end{equation}
\end{lem}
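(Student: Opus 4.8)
The plan is to realize $T^{\mathrm{Sp}}(\cdot;z)$ as the $\epsilon$-twist of an orthogonal function $T^{\mathrm{O}}$ and then transport the already-known $\omega^\lambda$-expansion through the algebra isomorphism $f\mapsto f^\epsilon$. First I would observe that the prefactor $(-1)^k(-2z)^{\ell(\mu)}$ in the definition of $T^{\mathrm{Sp}}(\sigma;z)$ depends only on the coset-type $\mu$ of $\sigma$, so the function $\sigma\mapsto(-1)^k(-2z)^{\ell(\mu)}$ lies in $L(S_{2k},H_k)$ and equals $(-1)^kT^{\mathrm{O}}(\sigma;-2z)$ by the definition of $T^{\mathrm{O}}$ under the substitution $z\mapsto-2z$. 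Since $T^{\mathrm{Sp}}(\sigma;z)=\epsilon(\sigma)\cdot(-1)^kT^{\mathrm{O}}(\sigma;-2z)$, this reads $T^{\mathrm{Sp}}(\cdot;z)=(-1)^k\,[T^{\mathrm{O}}(\cdot;-2z)]^\epsilon$, where $g^\epsilon(\sigma)=\epsilon(\sigma)g(\sigma)$.

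Next I would apply the isomorphism to the expansion $T^{\mathrm{O}}(\cdot;-2z)=\frac{2^kk!}{(2k)!}\sum_{\lambda\vdash k}f^{2\lambda}D_\lambda(-2z)\,\omega^\lambda$. Because $f\mapsto f^\epsilon$ is linear and sends $\omega^{\lambda'}\mapsto\pi^\lambda$, equivalently $(\omega^\lambda)^\epsilon=\pi^{\lambda'}$, I obtain $T^{\mathrm{Sp}}(\cdot;z)=(-1)^k\frac{2^kk!}{(2k)!}\sum_{\lambda\vdash k}f^{2\lambda}D_\lambda(-2z)\,\pi^{\lambda'}$. Reindexing by the conjugate partition $\nu=\lambda'$ then turns this into a sum over $\pi^\nu$, and it remains only to match coefficients.

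The coefficient-matching is where the real content sits, and it rests on two combinatorial identities. The first is the dimension identity $f^{2\lambda}=f^{\nu\cup\nu}$ for $\nu=\lambda'$: this follows from $(2\lambda)'=\lambda'\cup\lambda'$ (doubling the rows of $\lambda$ is conjugate to repeating the columns of $\lambda'$) together with the invariance $f^\mu=f^{\mu'}$ of the dimension under conjugation. The second is the polynomial identity $D_\lambda(-2z)=(-1)^kD'_{\nu}(z)$ for $\nu=\lambda'$: using the transpose bijection $(i,j)\in\lambda\Leftrightarrow(j,i)\in\nu$, each factor $-2z+2j-i-1$ of $D_\lambda(-2z)$ becomes $-2z+2a-b-1=-(2z-2a+b+1)$ at the cell $(a,b)=(j,i)\in\nu$, i.e. minus the corresponding factor of $D'_\nu(z)$; taking the product over the $k$ cells produces the sign $(-1)^k$. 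I expect the main obstacle to be exactly this bookkeeping, namely keeping the roles of $\lambda$ and its conjugate straight and verifying that the accumulated sign is $(-1)^k$ rather than $+1$.

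Finally, substituting the two identities gives $T^{\mathrm{Sp}}(\cdot;z)=(-1)^k\frac{2^kk!}{(2k)!}\sum_{\nu\vdash k}f^{\nu\cup\nu}\,(-1)^kD'_\nu(z)\,\pi^\nu$; the two factors of $(-1)^k$ cancel, and after renaming $\nu$ back to $\lambda$ this is precisely \eqref{eq:T-pi}.
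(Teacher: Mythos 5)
Your proof is correct, but it takes a genuinely different route from the paper's. The paper proves the lemma via symmetric function theory: it first sets $z=n$ a positive integer, uses the evaluations $p_\mu(1^n)=n^{\ell(\mu)}$ and $Z'_\lambda(1^n)=D'_\lambda(n)$ together with the expansion of $p_\mu$ in the twisted zonal functions $Z'_\lambda$ (whose coefficients involve $\pi^\lambda(\sigma_\mu)$, from \cite[VII, Example 2-7]{Mac}) to obtain the identity at all positive integers, and then extends to all complex $z$ since both sides are polynomials in $z$. You instead transport the orthogonal expansion $T^{\mathrm{O}}(\cdot;w)=\frac{2^k k!}{(2k)!}\sum_{\lambda\vdash k}f^{2\lambda}D_\lambda(w)\,\omega^\lambda$ (quoted in the paper from \cite{CM}) through the twist isomorphism $f\mapsto f^\epsilon$, using the relation $T^{\mathrm{Sp}}(\sigma;z)=(-1)^k\epsilon(\sigma)T^{\mathrm{O}}(\sigma;-2z)$ --- which the paper records immediately \emph{after} the lemma but does not exploit for its proof --- together with $(\omega^\lambda)^\epsilon=\pi^{\lambda'}$, which the paper does state. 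Your two coefficient identities check out: $(2\lambda)'=\lambda'\cup\lambda'$ combined with $f^\mu=f^{\mu'}$ gives $f^{2\lambda}=f^{\lambda'\cup\lambda'}$, and the transpose bijection $(i,j)\in\lambda\Leftrightarrow(j,i)\in\lambda'$ turns each factor $-2z+2j-i-1$ into $-(2z-2i'+j'+1)$ at the transposed cell, yielding $D_\lambda(-2z)=(-1)^kD'_{\lambda'}(z)$ and the advertised cancellation of the two signs $(-1)^k$. What your route buys: no symmetric-function evaluations and no interpolation step, since the orthogonal expansion is already a polynomial identity in its argument, and it makes the $\mathrm{O}\leftrightarrow\mathrm{Sp}$ duality $z\mapsto -2z$ do all the work, in parallel with the relation $\mathrm{Wg}^{\mathrm{Sp}}(\sigma;z)=(-1)^k\epsilon(\sigma)\mathrm{Wg}^{\mathrm{O}}(\sigma;-2z)$ noted in the paper. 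What the paper's route buys: independence from the orthogonal result --- it is the exact twisted analogue of how the $T^{\mathrm{O}}$ expansion was proved in \cite{CM} in the first place, rather than importing that expansion as a black box.
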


\begin{proof}
We first suppose that $z=n$, a positive integer.
We employ symmetric function theory.
Recall power-sum symmetric functions $p_\mu$ and 
twisted zonal functions $Z_\lambda'$. 
We only use the following properties for them here (see \cite[VII, Example 2-7]{Mac}).
For partitions $\lambda,\mu$ of $k$,
\begin{align*}
&p_\mu(1^n) = n^{\ell(\mu)}, \qquad 
Z_\lambda'(1^n) = D'_\lambda(n), \\
& p_\mu= \frac{2^k k!}{(2k)!} (-1)^{k-\ell(\mu)} 2^{-\ell(\mu)}\sum_{\lambda \vdash k}
 f^{\lambda \cup \lambda} \pi^\lambda(\sigma_\mu) Z_\lambda'.
\end{align*}
Here $(1^n)=(1,1,\dots,1)$ with $n$ times.
Hence, 
if $\mu$ is the coset-type of $\sigma \in S_{2k}$, then
$$
T^{\mathrm{Sp}}(\sigma;n) 
= \epsilon(\sigma)  (-1)^{k-\ell(\mu)} 2^{\ell(\mu)} p_\mu(1^n)
=  \epsilon(\sigma) \frac{2^k k!}{(2k)!}
\sum_{\lambda \vdash k} f^{\lambda \cup \lambda}\pi^\lambda(\sigma_\mu) D'_\lambda(n),
$$
which implies the desired formula for $z=n$.

Since the both sides on \eqref{eq:T-pi} are polynomials in $z$,
the equalities at all positive integers $z=n$ implies 
the ones at all complex numbers $z$.
\end{proof}

The {\it symplectic Weingarten function} with parameter $z$ is the 
function in $L^\epsilon(S_{2k},H_k)$ defined by
\begin{equation} \label{def:WgSp}
\mathrm{Wg}^{\rmSp}(\cdot;z)= \frac{2^k k!}{(2k)!} 
\sum_{\begin{subarray}{c} \lambda \vdash k \\ D'_\lambda(z) \not=0 \end{subarray}}
\frac{f^{\lambda \cup \lambda}}{D'_\lambda(z)} \pi^\lambda.
\end{equation}
Note that $T^{\mathrm{Sp}}(\sigma;z)= (-1)^k \epsilon(\sigma) T^{\mathrm{O}}(\sigma;-2z)$
and $\mathrm{Wg}^{\mathrm{Sp}}(\sigma;z)= 
(-1)^k \epsilon(\sigma) \mathrm{Wg}^{\mathrm{O}}(\sigma;-2z)$.
Equation \eqref{eq:TWT-ortho} is equivalent to 
\begin{equation} \label{eq:TWT}
T^{\mathrm{Sp}}(\cdot;z)  \star \mathrm{Wg}^{\rmSp}(\cdot;z) \star T^{\mathrm{Sp}} (\cdot;z)=T^{\mathrm{Sp}} (\cdot;z).
\end{equation}

\subsubsection{Integrals on symplectic groups}

Consider the vector space $\bC^{2n}$ of column vectors with standard basis
$(e_1,e_2,\dots,e_{2n})$.
Define the skew-symmetric bilinear form $\langle \cdot, \cdot \rangle$ on $\bC^{2n}$
by 
\begin{equation} \label{eq:SymplecticForm}
\langle v,w \rangle= \trans{v} J w
\end{equation}
with
$$
J=J_n :=\begin{pmatrix} O_n & I_n \\ -I_n & O_n \end{pmatrix}.
$$
Let $(e_1^\vee,e_2^\vee,\dots,e_{2n}^\vee)$ be the dual basis of $(e_1,\dots,e_{2n})$
with respect to $\langle \cdot, \cdot \rangle$:
$$
\langle e_i^\vee, e_j \rangle = \delta_{ij} \qquad (i,j \in [2n]).
$$
More specifically,
$(e_1^\vee,\dots,e_n^\vee,e_{n+1}^\vee,\dots,e_{2n}^\vee)=
(-e_{n+1},\dots,-e_{2n},e_1,\dots,e_n)$.
For convenience, we use the following notation: 
$$
\langle i,j \rangle :=\langle e_i,e_j \rangle = 
\langle e_i^\vee,e_j^\vee \rangle = 
\begin{cases} 1 & 
\text{if $1 \le i\le n$ and $j=i+n$}, \\
-1 & \text{if $1 \le j\le n$ and $i=j+n$}, \\
0 & \text{otherwise}.
\end{cases}
$$
Note that
\begin{equation} \label{eq:eeee}
e_i= \trans{J}e_i^\vee =\sum_{p=1}^{2n} \langle i,p \rangle e_p^\vee, \qquad
e_i^\vee=J e_i=\sum_{p=1}^{2n} \langle p,i \rangle e_p
\qquad (i=1,2,\dots,2n)
\end{equation}
and 
$J= \( \langle i, j \rangle \)_{1 \le i,j \le 2n}$.

For a $2n \times 2n$ matrix $X$, 
we define
the {\it dual} matrix of $X$ by
$$
\dual{X} := J \trans{X} \trans{J}.
$$
Then $\langle v, Xw \rangle = \langle \dual{X}v,w \rangle$
for all $v, w \in \bC^{2n}$.

We realize the (unitary) symplectic group $\rmSp(2n)$ 
by
$$
\rmSp(2n)= \{ S=(s_{ij})_{1 \le i,j \le 2n} \in \rmU(2n) \ | \ S\dual{S} =I_{2n}  \}.
$$
It is equipped with the Haar probability measure.
The following theorem was first given in \cite{CollinsStolz} without 
the explicit expression \eqref{def:WgSp} for $\mathrm{Wg}^{\rmSp}$.

\begin{thm} \label{thm:WgSp}
Let $S=(s_{ij})_{1 \le i,j \le 2n}$ be a
Haar symplectic matrix.
For two sequences
$\bm{i}=(i_1,\dots,i_{2k})$ and $\bm{j}=(j_1,\dots, j_{2k})$
of positive integers in $[2n]$,
we have
$$
\bE [ s_{i_1 j_1} s_{i_2 j_2} \cdots s_{i_{2k},j_{2k}} ] 
= \sum_{\sigma,\tau \in M_{2k}} \Delta_\sigma'(\bm{i}) 
\Delta_{\tau}'(\bm{j}) \mathrm{Wg}^{\rmSp} (\sigma^{-1}\tau; n).
$$
Here 
the symbol  $\Delta_{\sigma}'(\bm{i}) \in \{0,1,-1\}$ is defined by
\begin{equation} \label{eq:Delta'-ft}
\Delta_{\sigma}'(\bm{i}) := \prod_{r=1}^k \langle i_{\sigma(2r-1)}, i_{\sigma(2r)}
\rangle.
\end{equation}
Furthermore, $\bE [ s_{i_1 j_1} \cdots s_{i_{2k+1} j_{2k+1}}]=0$
for any $i_1,\dots,i_{2k+1},j_1,\dots,j_{2k+1}$.
\end{thm}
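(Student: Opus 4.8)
The plan is to realize the left-hand side as a matrix entry of the orthogonal projection onto the space of $\rmSp(2n)$-invariant tensors, and then to identify the resulting coefficient matrix with $\mathrm{Wg}^{\rmSp}(\cdot;n)$ by means of the pseudo-inverse relation \eqref{eq:TWT}. Before that, the odd case is immediate: since $-I_{2n}\in\rmSp(2n)$, the substitution $S\mapsto -S$ preserves the Haar measure while multiplying $s_{i_1j_1}\cdots s_{i_mj_m}$ by $(-1)^m$, so $\bE[s_{i_1j_1}\cdots s_{i_{2k+1}j_{2k+1}}]=0$.

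For $m=2k$, write $V=\bC^{2n}$ and observe that $\bE[S^{\otimes 2k}]$ is the orthogonal projection $P$ of $V^{\otimes 2k}$ onto the invariant subspace $(V^{\otimes 2k})^{\rmSp(2n)}$, because integrating a unitary representation against the Haar measure produces the self-adjoint projection onto its trivial isotypic component; concretely $P_{\bm{i},\bm{j}}=\bE[\prod_{m=1}^{2k}s_{i_mj_m}]$. By the first fundamental theorem of invariant theory for the symplectic group, this invariant subspace is spanned by the contraction tensors $v_\sigma$, $\sigma\in M_{2k}$, attached to the perfect matchings $\{\{\sigma(2r-1),\sigma(2r)\}\}_r$, whose coordinates are $v_\sigma(\bm{i})=\Delta_\sigma'(\bm{i})$ built from the symplectic form as in \eqref{eq:Delta'-ft}. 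Writing $P=\sum_{\sigma,\tau\in M_{2k}}v_\sigma\,c_{\sigma\tau}\,v_\tau^{*}$ and imposing $Pv_\tau=v_\tau$ (valid since each $v_\tau$ lies in the range of $P$) forces $c$ to be the pseudo-inverse of the Gram matrix $G_{\rho\tau}=\langle v_\rho,v_\tau\rangle=\sum_{\bm{j}}\Delta_\rho'(\bm{j})\Delta_\tau'(\bm{j})$, so that $P_{\bm{i},\bm{j}}=\sum_{\sigma,\tau}\Delta_\sigma'(\bm{i})\,c_{\sigma\tau}\,\Delta_\tau'(\bm{j})$.

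The decisive computation is the evaluation of $G$. Summing the product of two matchings' form-values over all $\bm{j}\in[2n]^{2k}$, the superposed red and blue edges decompose $[2k]$ into the cycles recorded by the graph $\Gamma(\rho^{-1}\tau)$, and the contraction identities $\sum_p\langle i,p\rangle\langle j,p\rangle=\sum_p\langle p,i\rangle\langle p,j\rangle=\delta_{ij}$ together with $\sum_p\langle i,p\rangle\langle p,j\rangle=-\delta_{ij}$ collapse each component of $2\mu_r$ vertices to a single scalar $\pm 2n$, where $\mu$ is the coset-type of $\rho^{-1}\tau$. Thus $G_{\rho\tau}=(\text{sign})\,(2n)^{\ell(\mu)}$, and the remaining task, which I expect to be the main obstacle, is to verify that the accumulated sign equals $(-1)^k\epsilon(\rho^{-1}\tau)(-1)^{\ell(\mu)}$, so that $G_{\rho\tau}=(-1)^k\epsilon(\rho^{-1}\tau)(-2n)^{\ell(\mu)}=T^{\mathrm{Sp}}(\rho^{-1}\tau;n)$. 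This sign bookkeeping — absent in the orthogonal case, where the invariant form is symmetric — is exactly where the antisymmetry of $\langle\cdot,\cdot\rangle$ must be tracked carefully around each alternating cycle.

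Finally I would transport the matrix statement into the algebra $L^\epsilon(S_{2k},H_k)$: under the correspondence sending a twisted-biinvariant function $f$ to the matrix $(f(\rho^{-1}\tau))_{\rho,\tau\in M_{2k}}$, the product $\star$ becomes matrix multiplication, so $G$ corresponds to $T^{\mathrm{Sp}}(\cdot;n)$ and its pseudo-inverse $c$ corresponds to $\mathrm{Wg}^{\rmSp}(\cdot;n)$, precisely because relation \eqref{eq:TWT} is the defining identity of the pseudo-inverse. Hence $c_{\sigma\tau}=\mathrm{Wg}^{\rmSp}(\sigma^{-1}\tau;n)$ and $P_{\bm{i},\bm{j}}=\sum_{\sigma,\tau\in M_{2k}}\Delta_\sigma'(\bm{i})\Delta_\tau'(\bm{j})\mathrm{Wg}^{\rmSp}(\sigma^{-1}\tau;n)$, as claimed. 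The degenerate ranks $n$ for which $G$ is singular (those with $D'_\lambda(n)=0$) are handled by the same pseudo-inverse relation, which is exactly how $\mathrm{Wg}^{\rmSp}$ was defined in \eqref{def:WgSp}.
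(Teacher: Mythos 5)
Your proposal follows essentially the same route as the paper's proof: the odd case via $-I_{2n}\in\rmSp(2n)$, the identification of $\bE[S^{\otimes 2k}]$ with the projection onto the invariant subspace, the First Fundamental Theorem giving the spanning vectors $v_\sigma=\rho_{2k}(\sigma)\theta_k$ with coordinates $\Delta'_\sigma(\bm{i})$, the Gram/pseudo-inverse expansion of the projection, and the transport of the matrix relation into $L^\epsilon(S_{2k},H_k)$ so that \eqref{eq:TWT} identifies the coefficients with $\mathrm{Wg}^{\rmSp}(\sigma^{-1}\tau;n)$. Your remark that uniqueness of the symmetric solution to $\mathsf{G}c\mathsf{G}=\mathsf{G}$ is not the point is sound: any solution reproduces the same projection, since $\mathsf{G}c\mathsf{G}=\mathsf{G}$ already forces $\sum_\sigma v_\sigma (c\mathsf{G})_{\sigma\rho}=v_\rho$ within the span, and the matrix $\bigl(\mathrm{Wg}^{\rmSp}(\sigma^{-1}\tau;n)\bigr)_{\sigma,\tau\in M_{2k}}$ is one such solution because, as you note, $f\mapsto(f(\rho^{-1}\tau))_{\rho,\tau\in M_{2k}}$ turns $\star$ into matrix multiplication on twisted-biinvariant functions.

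The one genuine gap is the step you yourself flag as "the remaining task": you never verify that the accumulated sign in the Gram evaluation is $(-1)^{k}\epsilon(\rho^{-1}\tau)(-1)^{\ell(\mu)}$, and as written the proof is incomplete at exactly the point that distinguishes the symplectic case from the orthogonal one. Your predicted answer is correct, and the paper closes this step without any cycle-by-cycle sign chase, via the identity \eqref{eq:TJJ}. After the substitution $q_s=j_{\tau(s)}$ your Gram entry becomes $G_{\rho\tau}=\mcal{T}_{\tau^{-1}\rho}(J,J)$, and the skew-symmetry of $J$ implies that $\sigma\mapsto\mcal{T}_\sigma(J,J)$ lies in $L^\epsilon(S_{2k},H_k)$; a twisted-biinvariant function is determined by its values on the representatives $\sigma_\mu$, where the sum factors over cycles and \eqref{eq:T-Tr-Sp} gives $\mcal{T}_{\sigma_{(m)}}(J,J)=-\Tr\bigl[(J^2)^m\bigr]=-\Tr\bigl[(-I_{2n})^m\bigr]$, whence $\mcal{T}_{\sigma_\mu}(J,J)=(-1)^{k-\ell(\mu)}(2n)^{\ell(\mu)}=T^{\mathrm{Sp}}(\sigma_\mu;n)$. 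You should incorporate this biinvariance reduction (or an equivalent direct argument tracking the sign $\langle i,p\rangle\langle p,j\rangle$-contractions pick up around each alternating cycle); with it, your proof matches the paper's.
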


We postpone the proof of this theorem in the next subsubsection.

\begin{example} \label{example:WgSp}
$\mathrm{Wg}^{\mathrm{Sp}}(\sigma;n)=  \frac{\epsilon(\sigma)}{2n}$
for $\sigma \in S_2$;
\begin{align*}
\mathrm{Wg}^{\mathrm{Sp}}(\sigma;n)=& \epsilon(\sigma) \frac{2n-1}{4n(n-1)(2n+1)} \qquad \text{for $\sigma \in H_2$}; \\
\mathrm{Wg}^{\mathrm{Sp}}(\sigma;n)=& \epsilon(\sigma) \frac{1}{4n(n-1)(2n+1)} \qquad \text{for $\sigma \in S_4 \setminus H_2$}.
\end{align*}
\end{example}

\subsubsection{Proof of Theorem \ref{thm:WgSp}}

This proof was given by Collins and Stolz \cite{CollinsStolz} 
(see also \cite{CM}),
however, they did not give any explicit expression for the symplectic Weingarten function.
We here reconstruct their proof and observe 
how $\mathrm{Wg}^{\rmSp}$ arises.

We first introduce the following useful notation.
For a permutation $\sigma \in S_{2k}$ and 
two matrices $X=(x_{ij})_{1 \le i,j \le n}$ and $Y=(y_{ij})_{1 \le i,j \le n}$,
put
$$
\mcal{T}_\sigma(X,Y)= \sum_{p_1,\dots,p_{2k} \in [n]}
\prod_{r=1}^k 
x_{p_{\sigma(2r-1)}, p_{\sigma(2r)}}
y_{p_{2r-1}, p_{2r}}
= \sum_{q_1,\dots,q_{2k} \in [n]} \prod_{r=1}^{k}
x_{q_{2r-1}, q_{2r}}
y_{q_{\sigma^{-1}(2r-1)}, q_{\sigma^{-1}(2r)}}.
$$
Note that $\mcal{T}_{\sigma^{-1}}(X,Y)=\mcal{T}_\sigma(Y,X)$
and $\mcal{T}_{\sigma_{\mu}} (X,Y)= \prod_{i=1}^{\ell(\mu)} 
\mcal{T}_{\sigma_{(\mu_i)}}(X,Y)$,
where $\sigma_\mu$ is defined in \eqref{eq:sigma-mu}.
Since 
$$
\mcal{T}_{\sigma_{(m)}}(X,Y)=
\sum_{p_1,\dots,p_{2m}}
x_{p_1,p_{2m}} y_{p_1, p_2} x_{p_2,p_3} y_{p_3, p_4} \cdots x_{p_{2m-2},p_{2m-1}}  
 \cdots y_{p_{2m-1},p_{2m}},
$$
we have 
\begin{equation} \label{eq:T-Tr-Sp}
\mcal{T}_{\sigma_{(m)}}(X,Y)
 = 
 \begin{cases} \Tr [(XY)^m] & \text{if $X$ is a symmetric matrix}, \\
  -\Tr [(XY)^m] & \text{if $X$ is a skew-symmetric matrix}. 
 \end{cases}
\end{equation}
As a particular case, we can see 
\begin{equation} \label{eq:TJJ}
\mcal{T}_\sigma(J,J)= T^{\mathrm{Sp}}(\sigma;n)
\qquad (\sigma \in S_{2k}).
\end{equation}
Indeed, the skew-symmetry of $J$ implies that
the function $\sigma \mapsto \mcal{T}_\sigma(J,J)$ 
belongs to $L^\epsilon(S_{2k},H_k)$,
and hence
it is enough to check  $\mcal{T}_{\sigma_{\mu}}(J,J)= 
T^{\mathrm{Sp}}(\sigma_\mu;n)$ for all partitions $\mu$.
However, it follows 
from \eqref{eq:T-Tr-Sp} that 
$\mcal{T}_{\sigma_\mu}(J,J)=\prod_{i=1}^{\ell(\mu)} \mcal{T}_{\sigma_{(\mu_i)}}(J,J)= 
\prod_{i=1}^{\ell(\mu)} [- \Tr (-I_{2n})^{\mu_i}] =(-1)^{k-\ell(\mu)} (2n)^{\ell(\mu)}
=T^{\mathrm{Sp}}(\sigma_{\mu};n)$.

We next recall the invariant theory for symplectic groups.
Consider the tensor product $(\bC^{2n})^{\otimes 2k}$
and define a bilinear form on $(\bC^{2n})^{\otimes 2k}$ by
$$
\left\langle \bigotimes_{j=1}^{2k} v_j, \bigotimes_{j=1}^{2k} w_j 
\right\rangle :=
\prod_{j=1}^{2k} \langle v_j, w_j \rangle,
\qquad (v_1,\dots,v_{2k}, w_1,\dots, w_{2k} \in \bC^{2n}),
$$
where the skew-symmetric bilinear form $\langle v,
w\rangle$ on the right hand side is defined in \eqref{eq:SymplecticForm}.
Note that this bilinear form on $(\bC^{2n})^{\otimes 2k}$ is symmetric.

Put 
$$
\theta_{k}= \sum_{p_1,\dots,p_k \in [2n]} e_{p_1}^\vee \otimes e_{p_1} \otimes 
\cdots \otimes e_{p_k}^\vee \otimes e_{p_k} \in (\bC^{2n})^{\otimes 2k}.
$$
The symmetric group $S_{2k}$ acts on $(\bC^{2n})^{\otimes 2k}$ by
$$
\rho_{2k}(\sigma) (v_1 \otimes \cdots \otimes v_{2k}) = 
v_{\sigma^{-1}(1)} \otimes \cdots \otimes v_{\sigma^{-1}(2k)},
$$
while the symplectic group $\rmSp(2n)$ acts by
$$
S (v_1 \otimes \cdots \otimes v_{2k}) = Sv_1 \otimes \cdots S v_{2k}.
$$
Then the First Fundamental Theorem for the symplectic group states that
$\{\rho_{2k}(\sigma) \theta_k \ | \ \sigma \in M_{2k}\}$ spans 
the vector subspace
of $(\bC^{2n})^{\otimes 2k}$ consisting of  
invariant elements under the action of $\rmSp(2n)$ (see, e.g., \cite[Theorem 5.3.4]{GW}).

Let us go back to the proof of Theorem \ref{thm:WgSp}.
Let $\mathsf{G}$ be the symmetric matrix 
\begin{equation} \label{eq:defG}
\mathsf{G}=(\langle \rho_{2k}(\sigma) \theta_k, \rho_{2k}(\tau) \theta_k\rangle)_{\sigma, \tau \in M_{2k}}
\end{equation} 
and $\mathsf{W}=(
\mathsf{w}(\sigma,\tau))_{\sigma,\tau \in M_{2k}}$  the pseudo-inverse matrix 
of $\mathsf{G}$,
i.e., $\mathsf{W}$ is the unique symmetric matrix satisfying $\mathsf{G W G}=
\mathsf{G}$.

Let $S=(s_{ij})_{1 \le i,j \le 2n}$ be a Haar symplectic matrix.
Since each matrix element $s_{ij}$ is expressed as 
$s_{ij} = \langle e_i^\vee, S e_j \rangle$,
we have
\begin{align*}
 \bE [ s_{i_1 j_1} \cdots s_{i_{2k} j_{2k}} ] 
=&  \bE \Big[ \langle 
 e_{i_1}^\vee \otimes \cdots  \otimes e_{i_{2k}}^\vee,
 S( e_{j_1} \otimes \cdots  \otimes e_{j_{2k}})
 \rangle \Big] \\
 =& \Big\langle 
 e_{i_1}^\vee \otimes \cdots  \otimes e_{i_{2k}}^\vee,
 \bE[S( e_{j_1} \otimes \cdots  \otimes e_{j_{2k}})]
 \Big\rangle.
\end{align*}
As mentioned above, for any $\bm{v} \in V^{\otimes 2k}$,
the $\rmSp(2n)$-invariant vector $\bE [S\bm{v}]$ 
can be expanded  in terms of $\rho_{2k}(\sigma) \theta_k$.
From a discussion  parallel to the proof of \cite[Theorem 2.1]{CM}
(see also \cite{CollinsStolz}), 
the expansion is given by
$$
\bE [S\bm{v}]= \sum_{\sigma \in M_{2k}} \(\sum_{\tau \in M_{2k}} \mathsf{w}(\sigma,\tau)
\langle \bm{v}, \rho_{2k}(\tau) \theta_k \rangle\) \rho_{2k}(\sigma) \theta_k.
$$
Applying this to the previous equation, we obtain
$$
 \bE [ s_{i_1 j_1} \cdots s_{i_{2k} j_{2k}} ] =  
\sum_{\sigma, \tau \in M_{2k}} \mathsf{w}(\sigma,\tau) \cdot
\langle e_{i_1}^\vee \otimes \cdots  \otimes e_{i_{2k}}^\vee, 
\rho_{2k}(\sigma) \theta_k \rangle 
\cdot \langle e_{j_1} \otimes \cdots  \otimes e_{j_{2k}}, \rho_{2k}(\tau)\theta_k \rangle.
$$
Here the values of bilinear forms are computed as follows:
\begin{align*}
&\langle e_{i_1}^\vee \otimes \cdots  \otimes e_{i_{2k}}^\vee, 
\rho_{2k}(\sigma) \theta_k \rangle 
= \langle \rho_{2k}(\sigma^{-1}) (e_{i_1}^\vee \otimes \cdots  \otimes e_{i_{2k}}^\vee),  \theta_k \rangle \\
=& \langle e_{i_{\sigma(1)}}^\vee \otimes \cdots  \otimes e_{i_{\sigma(2k)}}^\vee,  \theta_k \rangle 
= \prod_{r=1}^k \(\sum_{p=1}^{2n} \langle e_{i_{\sigma(2r-1)}}^\vee, e_{p}^\vee \rangle
\langle e_{i_{\sigma(2r)}}^\vee,e_p \rangle \) \\
=& \Delta_\sigma'(\bm{i}),
\end{align*}
and, similarly,  
$\langle e_{j_1} \otimes \cdots  \otimes e_{j_{2k}}, \rho_{2k}(\tau)\theta_k \rangle = \Delta_{\tau}'(\bm{j})$.
In conclusion, Theorem \ref{thm:WgSp} follows from the next lemma.

\begin{lem} \label{lem:WgSp}
For $\sigma,\tau \in M_{2k}$, 
$\mathsf{w}(\sigma,\tau)= \mathrm{Wg}^{\rmSp}(\sigma^{-1}\tau; n)$.
\end{lem}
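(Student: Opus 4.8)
The plan is to realise the Gram matrix $\mathsf{G}$ as the image of the function $T^{\mathrm{Sp}}(\cdot;n)\in L^\epsilon(S_{2k},H_k)$ under a faithful, symmetry- and product-preserving embedding of this algebra into matrices indexed by $M_{2k}$, and then to read off the pseudo-inverse from \eqref{eq:TWT} and \eqref{def:WgSp}. Concretely, I would prove (i) $\mathsf{G}(\sigma,\tau)=T^{\mathrm{Sp}}(\sigma^{-1}\tau;n)$, (ii) that ordinary matrix multiplication over $M_{2k}$ corresponds to the product $\star$, and (iii) that the pseudo-inverse relation then forces $\mathsf{w}=\mathrm{Wg}^{\rmSp}$.

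First I would compute the entries of $\mathsf{G}$. Since $\rho_{2k}$ permutes the tensor slots of both arguments at once, it preserves the symmetric bilinear form on $(\bC^{2n})^{\otimes 2k}$, whence
\[
\mathsf{G}(\sigma,\tau)=\langle\rho_{2k}(\sigma)\theta_k,\rho_{2k}(\tau)\theta_k\rangle=\langle\theta_k,\rho_{2k}(\sigma^{-1}\tau)\theta_k\rangle,
\]
a function of $\pi:=\sigma^{-1}\tau$ alone. Expanding both copies of $\theta_k$ and contracting slot by slot — exactly as in the computation of $\Delta'_\sigma(\bm i)$ above, but now with the $\theta_k$-indices summed over $[2n]$ and using $\langle e_a^\vee,e_b^\vee\rangle=\langle e_a,e_b\rangle=\langle a,b\rangle$, $\langle e_a^\vee,e_b\rangle=\delta_{ab}$, and $\langle e_a,e_b^\vee\rangle=-\delta_{ab}$ — collapses the double sum to $\mcal{T}_{\pi}(J,J)$. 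By \eqref{eq:TJJ} this equals $T^{\mathrm{Sp}}(\pi;n)$, so $\mathsf{G}(\sigma,\tau)=T^{\mathrm{Sp}}(\sigma^{-1}\tau;n)$. As $T^{\mathrm{Sp}}(\cdot;n)$ is invariant under inversion, $\mathsf{G}$ is symmetric, as it must be.

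Next I would set up the embedding. For $f\in L^\epsilon(S_{2k},H_k)$ define the $M_{2k}\times M_{2k}$ matrix $\Phi(f)(\sigma,\tau):=f(\sigma^{-1}\tau)$. Writing $S_{2k}=\bigsqcup_{\rho\in M_{2k}}\rho H_k$ and using that the two signature factors produced by left and right $H_k$-translations multiply to $1$, one finds $(f*g)(\sigma^{-1}\tau)=|H_k|\sum_{\rho\in M_{2k}}f(\sigma^{-1}\rho)g(\rho^{-1}\tau)$; combined with $f\star g=(2^k k!)^{-1}f*g$ this yields $\Phi(f)\Phi(g)=\Phi(f\star g)$. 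Thus $\Phi$ is an algebra homomorphism; evaluating at $\sigma=\mathrm{id}_{2k}\in M_{2k}$ shows it is injective, it sends $\bm{1}_k^\epsilon$ to the identity matrix, and it sends inversion-invariant functions to symmetric matrices. In particular $\Phi(T^{\mathrm{Sp}}(\cdot;n))=\mathsf{G}$.

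Finally I would conclude. By \eqref{def:WgSp} and the orthogonality of the $\pi^\lambda$, the function $\mathrm{Wg}^{\rmSp}(\cdot;n)$ inverts $T^{\mathrm{Sp}}(\cdot;n)$ on each $\pi^\lambda$ with $D'_\lambda(n)\neq0$ and vanishes on the remaining ones; hence $\Phi(\mathrm{Wg}^{\rmSp}(\cdot;n))$ is a symmetric matrix that inverts $\mathsf{G}$ on its range and annihilates its kernel, i.e.\ it is precisely the Moore--Penrose pseudo-inverse $\mathsf{W}$. Therefore $\mathsf{w}(\sigma,\tau)=\mathrm{Wg}^{\rmSp}(\sigma^{-1}\tau;n)$, as claimed; the relation \eqref{eq:TWT} is then the image under $\Phi$ of $\mathsf{G}\mathsf{W}\mathsf{G}=\mathsf{G}$. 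I expect the main obstacle to be the slot-by-slot evaluation in the second paragraph: one must track the four pairing cases and in particular the sign $-1$ in $\langle e_a,e_b^\vee\rangle=-\delta_{ab}$, since it is exactly this sign bookkeeping that reproduces the factor $(-1)^k\epsilon(\pi)$ and the base $-2z$ built into $T^{\mathrm{Sp}}$. A smaller point is to make sure $\Phi(\mathrm{Wg}^{\rmSp}(\cdot;n))$ is the genuine pseudo-inverse and not merely some solution of $\mathsf{G}\mathsf{W}\mathsf{G}=\mathsf{G}$, which is guaranteed by the spectral description of $\mathrm{Wg}^{\rmSp}$ just used.
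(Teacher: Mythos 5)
Your proposal is correct and takes essentially the same route as the paper's proof: you compute $\mathsf{G}(\sigma,\tau)=\langle\theta_k,\rho_{2k}(\sigma^{-1}\tau)\theta_k\rangle=\mcal{T}_{\sigma^{-1}\tau}(J,J)=T^{\mathrm{Sp}}(\sigma^{-1}\tau;n)$ via \eqref{eq:TJJ} and then identify $\mathsf{W}$ from the pseudo-inverse relation \eqref{eq:TWT}, exactly as the paper does. The only differences are expository: you spell out the unital algebra embedding $f\mapsto(f(\sigma^{-1}\tau))_{\sigma,\tau\in M_{2k}}$ carrying $\star$ to matrix multiplication, and you verify via the $\pi^\lambda$-expansion that $\mathrm{Wg}^{\rmSp}(\cdot;n)$ yields the genuine pseudo-inverse rather than just one solution of $\mathsf{GWG}=\mathsf{G}$ --- details the paper leaves implicit.
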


\begin{proof}
If we put
$$
\widetilde{T}(\sigma)= \langle \theta_k, \rho_{2k}(\sigma)\theta_k \rangle \qquad (\sigma \in S_{2k}),
$$
then $\mathsf{G}= ( \widetilde{T}(\sigma^{-1}\tau))_{\sigma,\tau \in M_{2k}}$.
Using \eqref{eq:eeee} we have
\begin{align*}
\theta_k=& \sum_{p_1,\dots,p_{2k}} \langle p_1, p_2 \rangle \langle p_3, p_4 \rangle
\cdots \langle p_{2k-1}, p_{2k} \rangle e_{p_1}^\vee \otimes e_{p_2}^\vee \otimes \cdots 
\otimes e_{p_{2k}}^\vee \\
=& \sum_{q_1,\dots,q_{2k}} \langle q_1, q_2 \rangle \langle q_3, q_4 \rangle
\cdots \langle q_{2k-1}, q_{2k} \rangle e_{q_1} \otimes e_{q_2} \otimes \cdots 
\otimes e_{q_{2k}},
\end{align*}
and hence
$$
\tilde{T}(\sigma)= \sum_{q_1,\dots,q_{2k}} \langle q_{\sigma(1)},q_{\sigma(2)} \rangle
\cdots \langle q_{\sigma(2k-1)},q_{\sigma(2k)} \rangle
\langle q_{1},q_{2} \rangle \cdots \langle q_{2k-1},q_{2k} \rangle
=\mcal{T}_\sigma(J,J),
$$
which implies $\widetilde{T}(\sigma)= T^{\mathrm{Sp}}(\sigma;n)$
by  \eqref{eq:TJJ}.
The matrix $(\mathrm{Wg}^{\rmSp}(\sigma^{-1}\tau; n))_{\sigma,\tau \in M_{2k}}$
is therefore the pseudo-inverse matrix of $\mathsf{G}= ( T^{\mathrm{Sp}}(\sigma^{-1}\tau;n))_{\sigma,\tau \in M_{2k}}$ by 
\eqref{eq:TWT}.
\end{proof}

\section{Circular ensembles}

From now we consider random matrix ensembles $\mcal{S}$ associated with classical 
symmetric spaces $G/K$.
As we mentioned in Introduction, such ensembles are realized 
in the following way.
$$
G/K \simeq \mcal{S}; \qquad G \ni g \mapsto \Omega(g)^{-1} g \in \mcal{S}.
$$
Here $\Omega$ is an involution on $G$ and $K$ is the fixed-point set of $\Omega$.
If $X$ is a Haar random matrix picked up from $G$, then
the matrix $V:= \Omega(X)^{-1} X$ is a random matrix associated with 
$G/K$.
We consider 
the seven series of random matrices associated with compact symmetric spaces
given in Figure \ref{ListCSS}.

In this section, we deal with the most important classes:
circular orthogonal ensembles (COE) and circular symplectic ensembles (CSE).

\subsection{Class A I}

{\it The setting for A I}:
$G=\rmU(n)$, $K=\rmO(n)$, $\Omega(g)= \overline{g}$.
$\mcal{S}$ consists of $n \times n$ symmetric unitary matrices.

When $U$ is an $n \times n$ Haar  unitary matrix,
a random matrix corresponding to $\rmU(n)/ \rmO(n)$ is defined by
$$
V=V^{\mathrm{A \, I}}= \Omega(U)^{-1} U= \trans{U} U
$$
and is said to be a COE matrix.
The Weingarten calculus for a COE matrix is constructed in \cite{M-COE}.
For completeness of this paper, we review main results in \cite{M-COE}.
Applying the Weingarten calculus for $\mathrm{U}(n)$, we can obtain
the following theorem.

\begin{thm}[Theorem 1.1 and Proposition 3.1 in \cite{M-COE}]
Let $V=V^{\mathrm{A \, I}}=(v_{ij})_{1 \le i,j \le n}$ be an $n \times n$ COE matrix.
For two sequences $\bm{i}=(i_1,\dots,i_{2k})$ and 
$\bm{j}=(j_1,\dots,j_{2k})$, we have
$$
\bE[ v_{i_1 i_2} v_{i_3 i_4} \cdots v_{i_{2k-1} i_{2k}}
\overline{ v_{j_1 j_2} v_{j_3 j_4} \cdots v_{j_{2k-1} j_{2k}}}]
= \sum_{\sigma \in S_{2k}} \delta_\sigma(\bm{i},\bm{j}) 
\mathrm{Wg}^{\mathrm{A \, I}}(\sigma;n)
$$
with  the convolution $\mathrm{Wg}^{\mathrm{A \, I}}(\cdot;n):
=T^{\mathrm{O}}(\cdot; n) * \mathrm{Wg}^{\mathrm{U}}(\cdot;n)$
in $L(S_{2k})$.
If $k \not=l$ then
$$
\bE[ v_{i_1 i_2} v_{i_3 i_4} \cdots v_{i_{2k-1} i_{2k}}
\overline{ v_{j_1 j_2} v_{j_3 j_4} \cdots v_{j_{2l-1} j_{2l}}}]
$$
always vanishes.
Moreover, the function $\mathrm{Wg}^{\mathrm{A \, I}}(\cdot;n)$
belongs to $L(S_{2k},H_k)$ and 
coincides with 
the orthogonal Weingarten function with parameter $n+1$, i.e., 
$$
\mathrm{Wg}^{\mathrm{A \, I}}(\sigma;n)=
\mathrm{Wg}^{\mathrm{O}}(\cdot;n+1).
$$
\end{thm}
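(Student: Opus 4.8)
The plan is to reduce everything to the unitary Weingarten calculus of Theorem \ref{thm:WgCal-Unitary} by unfolding the COE entries. Since $V=\trans{U}U$, each factor splits as $v_{i_{2r-1}i_{2r}}=\sum_{c_r\in[n]}u_{c_r i_{2r-1}}u_{c_r i_{2r}}$, so the left-hand side becomes a sum over auxiliary indices $c_1,\dots,c_k,d_1,\dots,d_k$ of an expectation of $2k$ entries $u$ against $2k$ conjugates $\overline{u}$. The vanishing for $k\neq l$ is then immediate, since $2k\neq 2l$ makes the corresponding unitary expectation vanish. Applying Theorem \ref{thm:WgCal-Unitary} with $2k$ in place of $k$, the row indices of the $u$'s are $(c_1,c_1,c_2,c_2,\dots)$ and those of the $\overline{u}$'s are $(d_1,d_1,\dots)$ (both constant on the blocks $\{2r-1,2r\}$), while the column indices are exactly $\bm{i}$ and $\bm{j}$. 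This yields a double sum over $\sigma,\tau\in S_{2k}$ of $\delta_\sigma$ (of the $c$- and $d$-sequences) times $\delta_\tau(\bm{i},\bm{j})$ times $\mathrm{Wg}^{\mathrm{U}}(\sigma^{-1}\tau;n)$.

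The first genuine step is to perform the summation over $c_r,d_r$. The constraint $\delta_\sigma=1$ forces $d_r=c_{p(\sigma(2r-1))}=c_{p(\sigma(2r))}$, where $p$ collapses each block $\{2r-1,2r\}$ to $r$; thus $d$ is determined by $c$, and $c$ must be constant on the equivalence classes generated by $p(\sigma(2r-1))\sim p(\sigma(2r))$. I would identify these classes with the connected components of $\Gamma(\sigma)$ after contracting the red edges: $2r-1$ and $2r$ always lie in a common component, contraction preserves connectivity and merges no components, so the number of classes equals $\ell(\mu)$ for $\mu$ the coset-type of $\sigma$. Hence $\sum_{c,d}\delta_\sigma(\cdots)=n^{\ell(\mu)}=T^{\mathrm{O}}(\sigma;n)$. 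Re-indexing $\tau=\sigma\rho$ and using centrality of $\mathrm{Wg}^{\mathrm{U}}$ recasts the double sum as $\sum_{\tau}\delta_\tau(\bm{i},\bm{j})\,(T^{\mathrm{O}}(\cdot;n)*\mathrm{Wg}^{\mathrm{U}}(\cdot;n))(\tau)$, which is the claimed convolution formula. The $H_k$-biinvariance then follows because $T^{\mathrm{O}}(\cdot;n)$ is $H_k$-biinvariant and $\mathrm{Wg}^{\mathrm{U}}(\cdot;n)$ is central: left-invariance comes from left-invariance of $T^{\mathrm{O}}$, and right-invariance comes after rewriting the convolution as $\mathrm{Wg}^{\mathrm{U}}*T^{\mathrm{O}}$ (legitimate by centrality) and invoking right-invariance of $T^{\mathrm{O}}$.

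The core of the theorem is the identity $T^{\mathrm{O}}(\cdot;n)*\mathrm{Wg}^{\mathrm{U}}(\cdot;n)=\mathrm{Wg}^{\mathrm{O}}(\cdot;n+1)$, which I would prove spectrally. Writing $\omega^\lambda=(2^k k!)^{-1}\chi^{2\lambda}*\bm{1}_k$ and using the standard character orthogonality $\chi^{2\lambda}*\mathrm{Wg}^{\mathrm{U}}(\cdot;n)=C_{2\lambda}(n)^{-1}\chi^{2\lambda}$ (from the expansion of $\mathrm{Wg}^{\mathrm{U}}$ together with $\chi^\mu*\chi^\nu=\delta_{\mu\nu}\frac{(2k)!}{f^\nu}\chi^\nu$), centrality gives the eigenvalue relation $\omega^\lambda*\mathrm{Wg}^{\mathrm{U}}(\cdot;n)=C_{2\lambda}(n)^{-1}\omega^\lambda$. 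Feeding the expansion $T^{\mathrm{O}}(\cdot;n)=\frac{2^k k!}{(2k)!}\sum_{\lambda\vdash k}f^{2\lambda}D_\lambda(n)\omega^\lambda$ through this reduces the claim to the polynomial content identity $C_{2\lambda}(z)=D_\lambda(z)D_\lambda(z+1)$, which I would verify row by row: row $i$ of $2\lambda$ contributes $\prod_{m=1}^{2\lambda_i}(z-i+m)$, while $D_\lambda(z)D_\lambda(z+1)$ contributes $\prod_{j=1}^{\lambda_i}(z+2j-i-1)(z+2j-i)$, and the two telescoping products coincide.

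I expect the two nonformal points to be, first, the auxiliary-index count showing $\sum_{c,d}\delta_\sigma=T^{\mathrm{O}}(\sigma;n)$ via red-edge contraction of $\Gamma(\sigma)$, and second, the content identity $C_{2\lambda}(z)=D_\lambda(z)D_\lambda(z+1)$, which is precisely the source of the surprising parameter shift $n\mapsto n+1$. Everything else is formal manipulation of the two commuting products $*$ and $\star$ and their shared spherical-function basis.
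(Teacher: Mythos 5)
Your proposal is correct and follows essentially the same route as the paper: the paper defers this theorem to \cite{M-COE}, whose proof is precisely your argument — unfold $V=\trans{U}U$, apply Theorem \ref{thm:WgCal-Unitary} in degree $2k$, resum the auxiliary indices via the red-edge contraction of $\Gamma(\sigma)$ to get $T^{\mathrm{O}}(\sigma;n)$, and then establish the parameter shift spectrally through the content identity $C_{2\lambda}(z)=D_\lambda(z)D_\lambda(z+1)$. Indeed, the paper's own in-text proofs for class A~II (Theorem \ref{thm:A II} and the proposition following it) are the exact CSE analogues of your two steps, with $\pi^\lambda$, $C_{\lambda\cup\lambda}(2z)=D'_\lambda(z)D'_\lambda(z-\tfrac{1}{2})$ in place of $\omega^\lambda$, $C_{2\lambda}(z)=D_\lambda(z)D_\lambda(z+1)$, and your implicit division by $C_{2\lambda}(n)$ is covered by the paper's remark that the restriction $C_\lambda(z)\neq 0$ may be ignored by working with the rational-function version of $\mathrm{Wg}^{\mathrm{U}}$.
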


As a corollary of this theorem, the following moments for a
single entry are computed (see \cite[Theorems 4.1 and 4.2]{M-COE}): 
\begin{align*}
\bE[ |v_{ii}|^{2k}]=& \frac{2^k k!}{(n+1)(n+3) \cdots (n+2k-1)},\\
\bE[ |v_{ij}|^{2k}]=& \frac{k!}{n(n+1)(n+2) \cdots (n+k-2)(n+2k-1)}
\quad (i \not=j).
\end{align*}
The first equation can be obtained easily from the previous theorem,
but the derivation of the second one is somewhat complicated.

\begin{example} 
From Example \ref{example:WgO} we have 
$\mathrm{Wg}^{\mathrm{A\, I}}(\sigma;n)=\mathrm{Wg}^{\mathrm{O}}(\sigma;n+1)= \frac{1}{n+1}$ for $\sigma \in S_2$;
$$
\mathrm{Wg}^{\mathrm{A\, I}}(\sigma;n) =\mathrm{Wg}^{\mathrm{O}}(\sigma;n+1)= 
\begin{cases}
\frac{n+2}{n(n+1)(n+3)} & \text{for $\sigma \in H_2$} \\
\frac{-1}{n(n+1)(n+3)} & \text{for $\sigma \in S_4 \setminus H_2$}. 
\end{cases}
$$
\end{example}

\subsection{Class A II}

{\it The setting for A II}:
$G= \rmU(2n)$, $K=\rmSp(2n)$, $\Omega(g)= (\dual{g})^{-1}$.
$\mcal{S}$ consists of $2n \times 2n$ unitary matrices $g$
satisfying $\dual{g}=g$.

When $U$ is a $2n \times 2n$ Haar unitary matrix,
a random matrix corresponding to $\rmU(2n)/\rmSp(2n)$ is defined by
$$
V=V^{\mathrm{A \, II}}= \dual{U} U
$$
and is said to be a CSE matrix.

We would like to compute  mixed moments 
for $v_{ij}$ and $\overline{v_{ij}}$.
In order to simplify the notation, we deal with
$$
\Tilde{v}_{ij}:=\langle e_i, V e_j \rangle \qquad (1 \le i,j \le 2n)
$$
instead of $v_{ij}=\langle e_i^\vee, V e_j \rangle$.
More specifically,
$$
v_{ij}= \begin{cases}
-\Tilde{v}_{i+n,j} & \text{if $1 \le i \le n$},\\
\Tilde{v}_{i-n,j} & \text{if $n+1 \le i \le 2n$}.
\end{cases}
$$

\begin{thm} \label{thm:A II}
Let $V$ be a $2n \times 2n$ CSE matrix.
For two sequences $\bm{i}=(i_1,\dots,i_{2k})$ and $\bm{j}=(j_1,\dots,j_{2k})$
in $[2n]^{\times 2k}$, we have
$$
\bE [\Tilde{v}_{i_1,i_2} \Tilde{v}_{i_3, i_4} \cdots \Tilde{v}_{i_{2k-1}, i_{2k}} 
\overline{ \Tilde{v}_{j_1,j_2} \Tilde{v}_{j_3, j_4} \cdots 
\Tilde{v}_{j_{2k-1}, j_{2k}} }]  
= \sum_{\sigma \in S_{2k}} \delta_{\sigma}(\bm{i},\bm{j}) 
\mathrm{Wg}^{\mathrm{A \, II}}(\sigma;n)
$$
with $\mathrm{Wg}^{\mathrm{A \, II}}(\cdot;n):=
T^{\mathrm{Sp}}(\cdot;n) * \mathrm{Wg}^{\mathrm{U}}(\cdot;2n)$.
If $k \not=l$ then
$$
\bE [\Tilde{v}_{i_1,i_2} \Tilde{v}_{i_3, i_4} \cdots \Tilde{v}_{i_{2k-1}, i_{2k}} 
\overline{ \Tilde{v}_{j_1,j_2} \Tilde{v}_{j_3, j_4} \cdots 
\Tilde{v}_{j_{2l-1}, j_{2l}} }]  
 =0.
$$
\end{thm}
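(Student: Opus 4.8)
The plan is to reduce the statement to the unitary Weingarten calculus (Theorem~\ref{thm:WgCal-Unitary}) together with the identity $\mcal{T}_\sigma(J,J)=T^{\mathrm{Sp}}(\sigma;n)$ from \eqref{eq:TJJ}, running parallel to the A~I case but with the symmetric pairing replaced by the symplectic form $\langle\cdot,\cdot\rangle$ and the dimension $n$ replaced by $2n$. First I would rewrite a single matrix element of $V=\dual{U}U$ through the entries of $U$. Using the defining relation $\langle v,Xw\rangle=\langle\dual{X}v,w\rangle$ together with $\dual{(\dual{U})}=U$, one finds $\Tilde{v}_{ab}=\langle e_a,\dual{U}Ue_b\rangle=\langle Ue_a,Ue_b\rangle$, and expanding $Ue_a=\sum_p u_{pa}e_p$ gives the clean bilinear expression $\Tilde{v}_{ab}=\sum_{p,q\in[2n]}\langle p,q\rangle\,u_{pa}u_{qb}$. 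This is exactly the payoff of working with $\Tilde{v}_{ij}=\langle e_i,Ve_j\rangle$ instead of $v_{ij}$.

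Next I would substitute this formula into the product and peel off the symplectic-form factors. Attaching summation indices $\bm{p}=(p_1,\dots,p_{2k})$ to the $\Tilde{v}$-factors and $\bm{q}=(q_1,\dots,q_{2k})$ to the $\overline{\Tilde{v}}$-factors, the quantity to be averaged becomes $\sum_{\bm{p},\bm{q}}\Delta_{\mathrm{id}}'(\bm{p})\,\Delta_{\mathrm{id}}'(\bm{q})\prod_{s=1}^{2k}u_{p_s i_s}\overline{u_{q_s j_s}}$, with $\Delta'$ as in \eqref{eq:Delta'-ft} and with the values of $\langle\cdot,\cdot\rangle$ being real so that conjugation is harmless. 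Applying Theorem~\ref{thm:WgCal-Unitary} for $\rmU(2n)$ in degree $2k$ to $\bE[\prod_s u_{p_s i_s}\overline{u_{q_s j_s}}]$, with $\bm{p}$ and $\bm{i}$ serving as the row and column indices of the $u$'s and $\bm{q},\bm{j}$ as those of the $\overline{u}$'s, yields a double sum over $\alpha,\beta\in S_{2k}$ of $\delta_\alpha(\bm{p},\bm{q})\,\delta_\beta(\bm{i},\bm{j})\,\mathrm{Wg}^{\mathrm{U}}(\alpha^{-1}\beta;2n)$.

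The decisive step is then to carry out the sum over the dummy indices $\bm{p},\bm{q}$. The factor $\delta_\alpha(\bm{p},\bm{q})$ forces $q_s=p_{\alpha(s)}$, whence $\Delta_{\mathrm{id}}'(\bm{q})=\Delta_\alpha'(\bm{p})$ and the inner sum collapses to $\sum_{\bm{p}}\Delta_{\mathrm{id}}'(\bm{p})\,\Delta_\alpha'(\bm{p})=\mcal{T}_\alpha(J,J)=T^{\mathrm{Sp}}(\alpha;n)$ by \eqref{eq:TJJ}. What survives is $\sum_{\alpha}T^{\mathrm{Sp}}(\alpha;n)\,\mathrm{Wg}^{\mathrm{U}}(\alpha^{-1}\beta;2n)$, which is precisely the convolution $(T^{\mathrm{Sp}}(\cdot;n)*\mathrm{Wg}^{\mathrm{U}}(\cdot;2n))(\beta)=\mathrm{Wg}^{\mathrm{A \, II}}(\beta;n)$; renaming $\beta$ as $\sigma$ produces the claimed formula. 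For the degree-mismatch assertion, note that each $\Tilde{v}$ (resp.\ $\overline{\Tilde{v}}$) supplies two $u$'s (resp.\ two $\overline{u}$'s), so a product of $k$ unbarred and $l$ barred factors carries $2k$ unbarred and $2l$ barred entries; the vanishing clause of Theorem~\ref{thm:WgCal-Unitary} then kills the average whenever $2k\neq 2l$, i.e.\ whenever $k\neq l$.

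I expect the main obstacle to be entirely a matter of bookkeeping rather than of any new idea: keeping track of the fact that the row indices $\bm{p},\bm{q}$ (the only ones touched by $\langle\cdot,\cdot\rangle$) are matched by $\alpha$, while the external indices $\bm{i},\bm{j}$ are matched by $\beta$, and verifying that the substitution $q_s=p_{\alpha(s)}$ really converts $\Delta_{\mathrm{id}}'(\bm{q})$ into $\Delta_\alpha'(\bm{p})$ with the correct orientation of the antisymmetric form. Once this index matching is secured, every individual step is a direct citation of material already established above.
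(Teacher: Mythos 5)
Your proposal is correct and takes essentially the same route as the paper's own proof: expand $\Tilde{v}_{ab}=\langle Ue_a,Ue_b\rangle=\sum_{p,q}\langle p,q\rangle u_{pa}u_{qb}$, apply Theorem \ref{thm:WgCal-Unitary} for $\rmU(2n)$ in degree $2k$, collapse the dummy-index sum via $\delta_\alpha(\bm{p},\bm{q})$ to $\mcal{T}_\alpha(J,J)=T^{\mathrm{Sp}}(\alpha;n)$ using \eqref{eq:TJJ}, and recognize the remaining sum over $\alpha$ as the convolution $T^{\mathrm{Sp}}(\cdot;n)*\mathrm{Wg}^{\mathrm{U}}(\cdot;2n)$. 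Your degree-mismatch argument (each factor contributes two entries of $U$, so $k\neq l$ gives $2k\neq 2l$ and the unitary vanishing clause applies) is also exactly the paper's parenthetical remark.
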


\begin{proof}
Each element of a $2n \times 2n$ CSE matrix $V =\dual{U}U$ is given as 
$$
\Tilde{v}_{i,i'}= \langle e_i, \dual{U} U e_{i'} \rangle = \langle U e_i, Ue_{i'} \rangle
= \sum_{p,q \in [2n]} u_{pi} u_{qi'} \langle p,q \rangle
\qquad (i,i' \in [2n]).
$$
Hence 
\begin{align*}
&\bE [\Tilde{v}_{i_1,i_2} \Tilde{v}_{i_3, i_4} \cdots \Tilde{v}_{i_{2k-1}, i_{2k}} 
\overline{ \Tilde{v}_{j_1,j_2} \Tilde{v}_{j_3, j_4} \cdots 
\Tilde{v}_{j_{2k-1}, j_{2k}} }]  \\
=& \sum_{\bm{p}=(p_1,\dots,p_{2k})} \sum_{\bm{q}=(q_1,\dots,q_{2k})}
\bE[ u_{p_1 i_1} \cdots u_{p_{k} i_{k}}
\overline{
u_{q_1 j_1} \cdots u_{q_{k}j_{k}}}] 
\prod_{r=1}^k \langle p_{2r-1},p_{2r} \rangle 
\langle q_{2r-1}, q_{2r} \rangle.
\end{align*}
(When $k \not=l$, 
$\bE[ u_{p_1 i_1} \cdots u_{p_{k} i_{k}}
\overline{
u_{q_1 j_1} \cdots u_{q_{l}j_{l}}}] \equiv0$.)
Applying the Weingarten calculus for a Haar unitary matrix $U$,
we have
$$
\bE [\Tilde{v}_{i_1,i_2} \Tilde{v}_{i_3, i_4} \cdots \Tilde{v}_{i_{2k-1}, i_{2k}} 
\overline{ \Tilde{v}_{j_1,j_2} \Tilde{v}_{j_3, j_4} \cdots 
\Tilde{v}_{j_{2k-1}, j_{2k}} }]  
= \sum_{\sigma \in S_{2k}} \delta_\sigma(\bm{i},\bm{j}) 
\sum_{\tau \in S_{2k}} \mathrm{Wg}^{\mathrm{U}}(\tau^{-1} \sigma;2n) 
\widetilde{T}(\tau),
$$
where
\begin{align*}
\widetilde{T}(\tau)=& \sum_{\bm{p}=(p_1,\dots,p_{2k}) \in [2n]^{\times 2k}} \sum_{\bm{q}=(q_1,\dots,q_{2k}) \in [2n]^{\times 2k}}
\delta_{\tau}(\bm{p},\bm{q}) \prod_{r=1}^k \langle p_{2r-1},p_{2r} \rangle 
\langle q_{2r-1}, q_{2r} \rangle \\
=& \mcal{T}_\tau(J,J)
=T^{\mathrm{Sp}}(\tau; n).
\end{align*}
The last equality above follows from \eqref{eq:TJJ}.
\end{proof}

\begin{prop} 
The function $\mathrm{Wg}^{\mathrm{A \, II}}(\cdot;n)
=T^{\mathrm{Sp}}(\cdot;n) * \mathrm{Wg}^{\mathrm{U}}(\cdot;2n)$ coincides with
the symplectic Weingarten function with parameter $n-\tfrac{1}{2}$.
Specifically,
for each $\sigma \in S_{2k}$, we have
$$ 
\mathrm{Wg}^{\mathrm{A \, II}}(\sigma;n)=
\mathrm{Wg}^{\rmSp}
(\sigma;n-\tfrac{1}{2}).
$$
\end{prop}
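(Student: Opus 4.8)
The plan is to compute the convolution $T^{\mathrm{Sp}}(\cdot;n)*\mathrm{Wg}^{\mathrm{U}}(\cdot;2n)$ spectrally and match it, coefficient by coefficient, against the expansion \eqref{def:WgSp} of $\mathrm{Wg}^{\rmSp}(\cdot;n-\tfrac12)$ in the twisted spherical basis $\{\pi^\lambda\}_{\lambda\vdash k}$. First I would record the two relevant expansions on $S_{2k}$. Rewriting \eqref{eq:T-pi} through $\pi^\lambda=(2^k k!)^{-1}\chi^{\lambda\cup\lambda}*\bm{1}_k^\epsilon$ gives
$$
T^{\mathrm{Sp}}(\cdot;n)=\frac{1}{(2k)!}\sum_{\lambda\vdash k}f^{\lambda\cup\lambda}D'_\lambda(n)\,\chi^{\lambda\cup\lambda}*\bm{1}_k^\epsilon,
$$
while the unitary Weingarten function on $S_{2k}$, using the Remark to take the full sum over all $\mu\vdash 2k$ as a rational function in the dimension parameter, is
$$
\mathrm{Wg}^{\mathrm{U}}(\cdot;2n)=\frac{1}{(2k)!}\sum_{\mu\vdash 2k}\frac{f^\mu}{C_\mu(2n)}\chi^\mu .
$$

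The second step is the convolution itself. Since $\mathrm{Wg}^{\mathrm{U}}(\cdot;2n)$ is central, I may commute $\bm{1}_k^\epsilon$ past each $\chi^\mu$ and apply the character orthogonality $\chi^{\lambda\cup\lambda}*\chi^\mu=\delta_{\mu,\lambda\cup\lambda}\tfrac{(2k)!}{f^{\lambda\cup\lambda}}\chi^{\lambda\cup\lambda}$. Only the terms with $\mu=\lambda\cup\lambda$ survive, and each reassembles into $\chi^{\lambda\cup\lambda}*\bm{1}_k^\epsilon=2^k k!\,\pi^\lambda$, so that
$$
T^{\mathrm{Sp}}(\cdot;n)*\mathrm{Wg}^{\mathrm{U}}(\cdot;2n)=\frac{2^k k!}{(2k)!}\sum_{\lambda\vdash k}\frac{f^{\lambda\cup\lambda}D'_\lambda(n)}{C_{\lambda\cup\lambda}(2n)}\,\pi^\lambda .
$$
(In passing this confirms that the left-hand side lies in $L^\epsilon(S_{2k},H_k)$, as it must for the asserted identity to make sense.) Comparing with \eqref{def:WgSp} at parameter $z=n-\tfrac12$, the proposition reduces to the single polynomial identity
$$
C_{\lambda\cup\lambda}(2n)=D'_\lambda(n)\,D'_\lambda\!\left(n-\tfrac12\right)\qquad(\lambda\vdash k),
$$
after which the factor $D'_\lambda(n)$ cancels and leaves the coefficient $f^{\lambda\cup\lambda}/D'_\lambda(n-\tfrac12)$.

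To prove this identity I would compare the two products cell by cell. The rows $2m-1$ and $2m$ of $\lambda\cup\lambda$ both have length $\lambda_m$, so
$$
C_{\lambda\cup\lambda}(2n)=\prod_{m\ge1}\prod_{j=1}^{\lambda_m}(2n+j-2m+1)(2n+j-2m).
$$
On the other side, $D'_\lambda(n)=\prod_{(i,j)\in\lambda}(2n-2i+j+1)$ and $D'_\lambda(n-\tfrac12)=\prod_{(i,j)\in\lambda}(2n-2i+j)$, whose product is exactly $\prod_{m\ge1}\prod_{j=1}^{\lambda_m}(2n-2m+j+1)(2n-2m+j)$; the two expressions agree term for term. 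Finally I would address the degenerate parameters exactly as in the unitary case (the Remark and \cite[Proposition 2.5]{CollinsSniady}): working with the full sums as rational functions in the dimension parameter and cancelling the common factor $D'_\lambda(n)$ keeps the identity valid even when some $C_{\lambda\cup\lambda}(2n)$ or $D'_\lambda(n)$ vanishes, so no separate treatment of poles is needed.

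The main obstacle is really the bookkeeping in the convolution step — tracking which $\mu\vdash 2k$ survive and checking that the leftover factors $\chi^{\lambda\cup\lambda}*\bm{1}_k^\epsilon$ reassemble correctly into $\pi^\lambda$ — together with getting the shifts in the content identity right. The identity itself is then a short, if delicate, cell-matching computation rather than a deep fact, and it is the direct twisted-Gelfand analogue of the relation $C_{2\lambda}(n)=D_\lambda(n)D_\lambda(n+1)$ underlying the A~I case.
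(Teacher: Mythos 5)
Your proposal is correct and follows essentially the same route as the paper's own proof: expand $T^{\mathrm{Sp}}(\cdot;n)$ in the $\pi^\lambda$ basis and $\mathrm{Wg}^{\mathrm{U}}(\cdot;2n)$ in irreducible characters, kill every $\mu\neq\lambda\cup\lambda$ by orthogonality (the paper invokes $\pi^\lambda*\chi^\mu=\delta_{\lambda\cup\lambda,\mu}\frac{(2k)!}{f^{\lambda\cup\lambda}}\pi^\lambda$ directly, which is exactly your commute-and-reassemble step with $\bm{1}_k^\epsilon$), and finish with the same content identity, which the paper writes in the equivalent ratio form $D'_\lambda(z)/C_{\lambda\cup\lambda}(2z)=1/D'_\lambda(z-\tfrac{1}{2})$. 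Your cell-by-cell verification of $C_{\lambda\cup\lambda}(2n)=D'_\lambda(n)\,D'_\lambda(n-\tfrac{1}{2})$ and your explicit treatment of degenerate parameters via the rational-function convention are both correct and, if anything, slightly more detailed than the paper's.
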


\begin{proof}
It follows from \eqref{eq:T-pi} and \eqref{def-uWg} that
\begin{align*}
T^{\mathrm{Sp}}(\cdot;n)* \mathrm{Wg}^{\mathrm{U}}( \cdot;2n)
=& \frac{2^k k!}{(2k)!} \sum_{\lambda \vdash k}
f^{\lambda \cup \lambda} D_\lambda'(n) \frac{1}{(2k)!}
\sum_{\mu \vdash 2k} \frac{f^\mu}{C_\mu(2n)} \pi^\lambda * \chi^\mu. 
\end{align*}
Since $\pi^\lambda * \chi^\mu =
\delta_{\lambda \cup\lambda,\mu}\frac{(2k)!}{f^{\lambda \cup\lambda}}
\pi^\lambda$ and
since 
$$
\frac{D_\lambda'(z)}{C_{\lambda \cup \lambda}(2z)}
= \frac{\prod_{(i,j) \in \lambda} (2z-2i+j+1)}{
\prod_{(i,j) \in \lambda} (2z+j-(2i-1))(2z+j-2i)}=
\frac{1}{\prod_{(i,j) \in \lambda} (2z+j-2i)} = \frac{1}{D_{\lambda}'(z-\tfrac{1}{2})},
$$
we have
$T^{\mathrm{Sp}} (\cdot;n)* \mathrm{Wg}^{\mathrm{U}}( \cdot;2n) 
= \frac{2^k k!}{(2k)!} \sum_{\lambda \vdash k}
\frac{f^{\lambda \cup \lambda}}{D'_\lambda(n-\frac{1}{2})} \pi^\lambda=
\mathrm{Wg}^{\rmSp}(\cdot; n-\tfrac{1}{2})$.
\end{proof}

\begin{example} 
From Example \ref{example:WgSp} we have
$\mathrm{Wg}^{\mathrm{A \, II}}(\sigma;n)
=\mathrm{Wg}^{\mathrm{Sp}}(\sigma;n-\tfrac{1}{2})=  
\frac{\epsilon(\sigma)}{2n-1}$
for $\sigma \in S_2$;
$$
\mathrm{Wg}^{\mathrm{A \, II}}(\sigma;n)=
\mathrm{Wg}^{\mathrm{Sp}}(\sigma;n-\tfrac{1}{2})=
\begin{cases}
 \epsilon(\sigma) \frac{n-1}{n(2n-1)(2n-3)} &
 \text{for $\sigma \in H_2$}, \\
\epsilon(\sigma) \frac{1}{2n(2n-1)(2n-3)}&
\text{for $\sigma \in S_4 \setminus H_2$}.
\end{cases}
$$
\end{example}

\section{Chiral ensembles}

In this section, we deal with random matrix ensembles associated 
with classes A~III, BD~I, C~II. 
They are known as chiral ensembles.

\subsection{Class A III}

{\it The setting for A  III}: 
$G=\rmU(n)$, $K=\rmU(a) \times \rmU(b)$, $\Omega(g)= I'_{ab} g I'_{ab}$,
where
$n=a+b$ with $a \ge b \ge 1$ and 
$$
I'_{ab}= \begin{pmatrix} I_a & O \\ O & -I_{b} \end{pmatrix}.
$$

Let $U$ be an $n \times n$ Haar unitary matrix.
A random matrix corresponding to
$\mathrm{U}(n)/\mathrm{U}(a) \times \mathrm{U}(b)=
\mathrm{SU}(n)/ \mathrm{S}(\mathrm{U}(a) \times \mathrm{U}(b))$
is defined by
$V= V^{\mathrm{A \, III}} = I'_{ab} U^* I'_{ab} U$.
For the sake of ease, we consider 
a Hermitian and unitary random matrix
$$
W= W^{\mathrm{ A \, III}} = U^* I'_{ab} U,
$$
instead of $V =I'_{ab}W$.

We define 
the function $T^{\mathrm{A\, III}}_{ab}$ in $\mcal{Z}(L(S_k))$
by
$$
T^{\mathrm{A \, III}}_{ab}(\sigma) = \Tr_\sigma(I'_{ab})
$$
where $\Tr_\sigma (A)=\prod_{j=1}^{\ell(\mu)} \Tr (A^{\mu_j})$
if $\mu$ is the cycle-type of $\sigma$.
More specifically, we have 
$$
T^{\mathrm{A \, III}}_{ab}(\sigma)= (a+b)^{\ell^{\mathrm{e}}(\mu)}
(a-b)^{\ell^{\mathrm{o}}(\mu)}.
$$
Here $\ell^{\mathrm{e}}(\mu)$
(resp. $\ell^{\mathrm{o}}(\mu)$)
is the number of parts $\mu_j$ with even lengths (resp. odd lengths).

\begin{thm} \label{thm:AIII}
Let $W=W^{\mathrm{A \, III}}$ be the random matrix defined as above.
For two sequences $\bm{i}=(i_1,\dots,i_{k})$
and $\bm{j}=(j_1,\dots,j_k)$, we have
$$
\bE[ w_{i_1 j_1} w_{i_2 j_2} \cdots w_{i_{k} j_{k}}]
= \sum_{\sigma \in S_{k}} \delta_{\sigma}(\bm{i},\bm{j}) 
\mathrm{Wg}^{\mathrm{A \, III}} (\sigma;a,b).
$$
Here the function $\mathrm{Wg}^{\mathrm{A \, III}} (\cdot;a,b)$
in $\mcal{Z}(L(S_{k}))$ is defined by
$$
\mathrm{Wg}^{\mathrm{A \, III}} (\cdot;a,b)
= T^{\mathrm{A \, III}}_{ab} * \mathrm{Wg}^{\mathrm{U}}(\cdot; n).
$$
\end{thm}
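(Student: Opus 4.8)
The plan is to reproduce the strategy used for classes A\,I and A\,II: expand the entries of $W$ in terms of the Haar unitary $U$, apply the unitary Weingarten calculus (Theorem \ref{thm:WgCal-Unitary}), and then resum. Writing $I'_{ab}=\diag(\epsilon_1,\dots,\epsilon_n)$ with $\epsilon_p=+1$ for $p\le a$ and $\epsilon_p=-1$ for $p>a$, each entry of $W=U^*I'_{ab}U$ becomes
$$
w_{ij}=\sum_{p=1}^n \epsilon_p\,\overline{u_{pi}}\,u_{pj}.
$$
Substituting this into $\bE[w_{i_1j_1}\cdots w_{i_kj_k}]$ produces a sum over $\bm{p}=(p_1,\dots,p_k)\in[n]^{\times k}$ of $\bigl(\prod_s\epsilon_{p_s}\bigr)\,\bE\bigl[u_{p_1j_1}\cdots u_{p_kj_k}\,\overline{u_{p_1i_1}\cdots u_{p_ki_k}}\bigr]$. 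The key observation is that the row indices of the holomorphic and anti-holomorphic factors coincide (both equal $\bm{p}$), so applying Theorem \ref{thm:WgCal-Unitary} yields $\sum_{\sigma,\tau\in S_k}\delta_\sigma(\bm{p},\bm{p})\,\delta_\tau(\bm{j},\bm{i})\,\mathrm{Wg}^{\mathrm{U}}(\sigma^{-1}\tau;n)$.

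Next I would interchange the order of summation and carry out the sum over $\bm{p}$ first, holding $\sigma$ fixed. The factor $\delta_\sigma(\bm{p},\bm{p})=\prod_s\delta_{p_{\sigma(s)},p_s}$ forces $\bm{p}$ to be constant along each cycle of $\sigma$, so if $\sigma$ has cycle-type $\mu$ the sum factorizes over cycles and gives
$$
\sum_{\bm{p}}\Bigl(\prod_{s}\epsilon_{p_s}\Bigr)\delta_\sigma(\bm{p},\bm{p})
=\prod_{j=1}^{\ell(\mu)}\Bigl(\sum_{p=1}^n\epsilon_p^{\,\mu_j}\Bigr)
=\prod_{j=1}^{\ell(\mu)}\Tr\bigl((I'_{ab})^{\mu_j}\bigr)=T^{\mathrm{A\,III}}_{ab}(\sigma),
$$
using $\sum_p\epsilon_p^{\,m}=a+(-1)^m b$ together with the defining formula $T^{\mathrm{A\,III}}_{ab}(\sigma)=(a+b)^{\ell^{\mathrm{e}}(\mu)}(a-b)^{\ell^{\mathrm{o}}(\mu)}$. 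This is the step that genuinely carries the symmetric-space information, and it is where I would concentrate care; the remaining manipulations are formal.

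Finally I would repackage the double sum $\sum_{\sigma,\tau}T^{\mathrm{A\,III}}_{ab}(\sigma)\,\delta_\tau(\bm{j},\bm{i})\,\mathrm{Wg}^{\mathrm{U}}(\sigma^{-1}\tau;n)$ as a convolution. Using the elementary identity $\delta_\tau(\bm{j},\bm{i})=\delta_{\tau^{-1}}(\bm{i},\bm{j})$ and setting $\rho=\tau^{-1}$, the inner sum over $\sigma$ becomes $\sum_\sigma T^{\mathrm{A\,III}}_{ab}(\sigma)\,\mathrm{Wg}^{\mathrm{U}}(\sigma^{-1}\rho^{-1};n)=(T^{\mathrm{A\,III}}_{ab}*\mathrm{Wg}^{\mathrm{U}}(\cdot;n))(\rho^{-1})=\mathrm{Wg}^{\mathrm{A\,III}}(\rho^{-1};a,b)$. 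Since both $T^{\mathrm{A\,III}}_{ab}$ and $\mathrm{Wg}^{\mathrm{U}}(\cdot;n)$ lie in $\mcal{Z}(L(S_k))$, their convolution is a class function on $S_k$, and because in $S_k$ every permutation is conjugate to its inverse we may replace $\rho^{-1}$ by $\rho$. This gives exactly $\sum_{\rho\in S_k}\delta_\rho(\bm{i},\bm{j})\,\mathrm{Wg}^{\mathrm{A\,III}}(\rho;a,b)$, completing the argument. The only points demanding attention are the correct index-matching when invoking the unitary Weingarten formula and the cycle-factorization of the $\bm{p}$-sum; everything else is bookkeeping.
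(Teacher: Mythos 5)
Your proof is correct, but it takes a genuinely different route from the paper. The paper's proof is essentially two lines: since $W=U^*I'_{ab}U$ is Hermitian and its distribution is invariant under conjugation $W\mapsto gWg^*$ by any fixed unitary $g$, it invokes the general moment formula for unitarily invariant Hermitian random matrices (Theorem 3.1 of \cite{CMS}), which gives $\bE[w_{i_1j_1}\cdots w_{i_kj_k}]=\sum_{\sigma,\tau\in S_k}\delta_\sigma(\bm{i},\bm{j})\,\mathrm{Wg}^{\mathrm{U}}(\tau^{-1}\sigma;n)\,\bE[\Tr_\tau(W)]$, and then observes that $\Tr_\tau(W)=\Tr_\tau(I'_{ab})$ holds deterministically (each $\Tr(W^m)=\Tr(U^*(I'_{ab})^mU)=\Tr((I'_{ab})^m)$), so $\bE[\Tr_\tau(W)]=T^{\mathrm{A\,III}}_{ab}(\tau)$ and the convolution $T^{\mathrm{A\,III}}_{ab}*\mathrm{Wg}^{\mathrm{U}}(\cdot;n)$ appears at once. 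You instead give a self-contained derivation in the style of the paper's own proof of Theorem \ref{thm:A II}: expand $w_{ij}=\sum_p\epsilon_p\overline{u_{pi}}u_{pj}$, apply Theorem \ref{thm:WgCal-Unitary} with the row indices of both the holomorphic and anti-holomorphic factors equal to $\bm{p}$, and then evaluate the $\bm{p}$-sum. Your key computation $\sum_{\bm{p}}\bigl(\prod_s\epsilon_{p_s}\bigr)\delta_\sigma(\bm{p},\bm{p})=\prod_j\bigl(a+(-1)^{\mu_j}b\bigr)=T^{\mathrm{A\,III}}_{ab}(\sigma)$ is exactly right (it is the direct-computation counterpart of the paper's deterministic trace observation), and your closing repackaging is also sound: $\delta_\tau(\bm{j},\bm{i})=\delta_{\tau^{-1}}(\bm{i},\bm{j})$, and since $T^{\mathrm{A\,III}}_{ab}$ and $\mathrm{Wg}^{\mathrm{U}}(\cdot;n)$ are central their convolution is a class function, so $\mathrm{Wg}^{\mathrm{A\,III}}(\rho^{-1};a,b)=\mathrm{Wg}^{\mathrm{A\,III}}(\rho;a,b)$ because $\rho^{-1}$ and $\rho$ share a cycle-type. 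What each approach buys: the paper's is shorter and isolates the conceptual point that for an invariant Hermitian ensemble only the moments $\bE[\Tr_\tau(W)]$ matter, so the same template covers BD\,I and C\,II mutatis mutandis; yours avoids the external citation entirely, uses only tools proved or stated in this paper, and makes visible where the symmetric-space data $(a,b)$ enters, at the cost of some index bookkeeping that the black-box theorem hides.
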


\begin{proof}
The random matrix $W$ is Hermitian and 
the distribution of $W$ is 
invariant under the unitary transform $W \mapsto g W g^*$,
where $g$ is a fixed unitary matrix.
Hence we can apply Theorem 3.1 in \cite{CMS}
and obtain
$$
\bE[ w_{i_1 j_1} w_{i_2 j_2} \cdots w_{i_{k} j_{k}}]
= \sum_{\sigma,\tau \in S_{k}} \delta_{\sigma}(\bm{i},\bm{j}) 
\mathrm{Wg}^{\mathrm{U}}(\tau^{-1} \sigma;n) \bE[ \Tr_\tau(W)].
$$
Here, wee see that $\bE[ \Tr_\tau(W)]= \bE[\Tr_\tau(I'_{ab})]
= \Tr_\tau(I'_{ab}) = T^{\mathrm{A \, III}}_{ab}(\tau)$,
and we obtain the desired identity.
\end{proof}

\begin{example} $n=a+b$.
$\mathrm{Wg}^{\mathrm{A \, III}}(\mathrm{id}_1;a,b)=\frac{a-b}{n}$; 
$$
\mathrm{Wg}^{\mathrm{A \, III}}(\mathrm{id}_2;a,b)=\frac{(a-b+1)(a-b-1)}{(n+1)(n-1)},\qquad 
\mathrm{Wg}^{\mathrm{A \, III}}((1 \ 2);a,b)=\frac{4ab}{n(n-1)(n+1)}.
$$
\end{example}

Recall power symmetric functions $p_\mu$ and Schur functions $s_\lambda$.
They have the relation
$$
p_\mu= \sum_{\lambda \vdash k} \chi^\lambda (\sigma) s_\lambda 
$$
if $\mu \vdash k$ is the cycle-type of $\sigma$ (\cite[I, (7.8)]{Mac}).
Furthermore, it is easy to see that
$$
p_\mu(1^a, (-1)^b)=
p_\mu(\underbrace{1,1,\dots,1}_a, \underbrace{-1,-1,\dots,-1}_b)= T^{\mathrm{A \, III}}_{ab}(\sigma).
$$
Hence we obtain the expansion of $T^{\mathrm{A \, III}}_{ab}$ in terms of
irreducible characters $\chi^\lambda$:
$$
T^{\mathrm{A \, III}}_{ab}= \sum_{\lambda \vdash k} s_{\lambda}
(1^a, (-1)^b) \chi^\lambda.
$$
On the other hand,
from
the well-known identity 
$s_\lambda(1^n)= \frac{f^\lambda C_\lambda(n)}{k!}$,
the unitary Weingarten function is expressed as
$$
\mathrm{Wg}^{\mathrm{U}}(\cdot ;n)= \frac{1}{(k!)^2}
\sum_{\lambda \vdash k} \frac{(f^\lambda)^2}{s_\lambda(1^n)} \chi^\lambda.
$$
Consequently,  using
the relation $\chi^\lambda *\chi^\mu = \frac{k!}{f^\lambda} \delta_{\lambda \mu} \chi^\lambda$,
we obtain
the following expansion of $\mathrm{Wg}^{\mathrm{A \, III}}(\cdot;a,b)$
in terms of $\chi^\lambda$:
$$
\mathrm{Wg}^{\mathrm{A \, III}}(\cdot;a,b)=
T^{\mathrm{A \, III}}_{ab} * \mathrm{Wg}^{\mathrm{U}}(\cdot; n)=
\frac{1}{k!} \sum_{\lambda \vdash k} f^\lambda \frac{
s_{\lambda}
(1^a,(-1)^b)}
{s_{\lambda}
(1^{a+b})} \chi^\lambda.
$$

\subsection{Class BD I}

{\it The setting for BD  I}: 
$G=\rmO(n)$, $K=\rmO(a) \times \rmO(b)$, $\Omega(g)= I'_{ab} g I'_{ab}$,
where
$n=a+b$ with $a \ge b \ge 1$.

The discussion is parallel to that in the previous subsection.
We deal with the symmetric and orthogonal random matrix
$$
W=W^{\mathrm{BD\,I}} = \trans{R} I'_{ab} R,
$$
where $R$ is an $n \times n$ Haar orthogonal matrix.
This random matrix is associated with
the symmetric space
$\mathrm{O}(n)/\mathrm{O}(a) \times \mathrm{O}(b)=
\mathrm{SO}(n)/ \mathrm{S}(\mathrm{O}(a) \times \mathrm{O}(b))$.

We define 
the function $T^{\mathrm{BD\, I}}_{ab}$ in $L(S_{2k},H_k)$ by
$$
T^{\mathrm{BD \, I}}(\sigma) = \mcal{T}_\sigma(I'_{ab},I_n).
$$
More specifically,
if $\sigma \in S_{2k}$ has the coset-type $\mu=(\mu_1,\mu_2,\dots,\mu_l)$, then
$$
T^{\mathrm{BD \, I}}_{ab}(\sigma)
=\prod_{i=1}^{\ell(\mu)} \Tr (I'_{ab})^{\mu_i}=
 (a+b)^{\ell^{\mathrm{e}}(\mu)}
(a-b)^{\ell^{\mathrm{o}}(\mu)}.
$$

\begin{thm}
Let $W=W^{\mathrm{BD \, I}}$ be the random matrix defined as above.
For a sequence $\bm{i}=(i_1,\dots,i_{2k})$, we have
$$
\bE[ w_{i_1 i_2} w_{i_3 i_4} \cdots w_{i_{2k-1} i_{2k}}]
= \sum_{\sigma \in M_{2k}} \Delta_{\sigma}(\bm{i}) 
\mathrm{Wg}^{\mathrm{BD \, I}} (\sigma;a,b).
$$
Here the function $\mathrm{Wg}^{\mathrm{BD \, I}} (\cdot;a,b)$
in $L(S_{2k},H_k)$ is defined by
$$
\mathrm{Wg}^{\mathrm{BD \, I}} (\cdot;a,b)
= T^{\mathrm{BD \, I}}_{ab} \star \mathrm{Wg}^{\mathrm{O}}(\cdot; n).
$$
\end{thm}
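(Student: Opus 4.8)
The plan is to mirror the argument of the previous subsection (class A\,II) and the proof of Theorem~\ref{thm:AIII}: reduce everything to the orthogonal Weingarten calculus already established, by expanding each entry of $W$ in terms of the entries of the Haar orthogonal matrix $R$. Since $I'_{ab}$ is diagonal with diagonal entries $\epsilon_p := (I'_{ab})_{pp}$ (equal to $+1$ for $p \le a$ and to $-1$ for $a < p \le n$), one has $w_{ij} = \sum_{p=1}^{n} \epsilon_p\, r_{pi} r_{pj}$. Substituting this into the product and taking expectations gives
\[
\bE\Big[\prod_{r=1}^{k} w_{i_{2r-1} i_{2r}}\Big]
= \sum_{p_1,\dots,p_k \in [n]} \Big(\prod_{r=1}^{k}\epsilon_{p_r}\Big)\,
\bE\Big[\prod_{r=1}^{k} r_{p_r i_{2r-1}}\, r_{p_r i_{2r}}\Big],
\]
so that the right-hand side is a linear combination of moments of $2k$ entries of $R$, which is exactly what the Weingarten calculus for orthogonal groups computes.

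Next I would apply that theorem with the row-sequence $\bm{p}'=(p_1,p_1,p_2,p_2,\dots,p_k,p_k)$ and the column-sequence $\bm{i}$, obtaining
\[
\bE\Big[\prod_{r=1}^{k} r_{p_r i_{2r-1}}\, r_{p_r i_{2r}}\Big]
= \sum_{\alpha,\beta \in M_{2k}} \Delta_\alpha(\bm{p}')\,\Delta_\beta(\bm{i})\,
\mathrm{Wg}^{\mathrm{O}}(\alpha^{-1}\beta;n),
\]
and then interchange the summations so that the auxiliary indices $p_1,\dots,p_k$ are summed first. The key computation is that, for each fixed $\alpha \in M_{2k}$,
\[
\sum_{p_1,\dots,p_k \in [n]} \Big(\prod_{r=1}^{k}\epsilon_{p_r}\Big)\,\Delta_\alpha(\bm{p}')
= \mcal{T}_\alpha(I'_{ab}, I_n) = T^{\mathrm{BD \, I}}_{ab}(\alpha).
\]
Here the constraint $p'_{2r-1}=p'_{2r}=p_r$ built into $\bm{p}'$ matches the factor $(I_n)_{p_{2r-1},p_{2r}}=\delta_{p_{2r-1},p_{2r}}$ in the definition of $\mcal{T}$, while the deltas in $\Delta_\alpha$ together with the signs $\epsilon_{p_r}$ reassemble into the factors $(I'_{ab})_{p_{\alpha(2r-1)},p_{\alpha(2r)}}$; the resulting value is then read off from \eqref{eq:T-Tr-Sp} as $\prod_i \Tr\big((I'_{ab})^{\mu_i}\big)$, where $\mu$ is the coset-type of $\alpha$, which is precisely $T^{\mathrm{BD \, I}}_{ab}(\alpha)$.

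Combining these and recalling the definition of the product $\star$ on $L(S_{2k},H_k)$, I would conclude
\[
\bE\Big[\prod_{r=1}^{k} w_{i_{2r-1} i_{2r}}\Big]
= \sum_{\beta \in M_{2k}} \Delta_\beta(\bm{i}) \sum_{\alpha \in M_{2k}} T^{\mathrm{BD \, I}}_{ab}(\alpha)\,\mathrm{Wg}^{\mathrm{O}}(\alpha^{-1}\beta;n)
= \sum_{\beta \in M_{2k}} \Delta_\beta(\bm{i})\,\big(T^{\mathrm{BD \, I}}_{ab}\star \mathrm{Wg}^{\mathrm{O}}\big)(\beta),
\]
which is the asserted formula, the $\star$-product lying in $L(S_{2k},H_k)$ because both $T^{\mathrm{BD \, I}}_{ab}$ and $\mathrm{Wg}^{\mathrm{O}}(\cdot;n)$ are $H_k$-biinvariant. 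The main obstacle is the middle displayed identity: verifying that the sum over the auxiliary indices $p_1,\dots,p_k$ collapses to $T^{\mathrm{BD \, I}}_{ab}(\alpha)$. This requires the graph-theoretic bookkeeping of $\Gamma(\alpha)$ — each connected component, carrying $2\mu_i$ vertices, forces all of its $p$-labels to coincide and contributes $\sum_{p} \epsilon_p^{\mu_i} = a + (-1)^{\mu_i} b$ — together with the sign accounting showing that $\prod_r \epsilon_{p_r}$ and the signs produced by rewriting $\Delta_\alpha$ as an $I'_{ab}$-weight agree on the support. An equally valid and shorter route exactly parallels Theorem~\ref{thm:AIII}: since $W=\trans{R}I'_{ab}R$ is orthogonally conjugate to the fixed matrix $I'_{ab}$, its power traces $\Tr(W^m)=\Tr\big((I'_{ab})^m\big)$ are deterministic, so the orthogonal counterpart of Theorem~3.1 in \cite{CMS} gives $\bE[\mcal{T}_\tau(W,I_n)]=T^{\mathrm{BD \, I}}_{ab}(\tau)$ directly.
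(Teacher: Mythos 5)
Your proposal is correct, but your main argument takes a genuinely different route from the paper's. The paper disposes of this theorem in two lines: since $W=\trans{R}I'_{ab}R$ is symmetric and its distribution is invariant under conjugation by any fixed orthogonal matrix, it invokes the general moment formula for orthogonally invariant symmetric random matrices (Theorem 3.3 in \cite{CMS}), exactly as the proof of Theorem \ref{thm:AIII} invokes Theorem 3.1 there, and then uses the fact that the relevant trace functionals are deterministic, $\bE[\mcal{T}_\tau(W,I_n)]=\mcal{T}_\tau(I'_{ab},I_n)=T^{\mathrm{BD\,I}}_{ab}(\tau)$. Your primary argument instead bypasses any invariance theorem: you expand $w_{ij}=\sum_p \epsilon_p\, r_{pi}r_{pj}$, apply the Haar-orthogonal Weingarten formula with the doubled row sequence $\bm{p}'=(p_1,p_1,\dots,p_k,p_k)$, and collapse the auxiliary sum to $\mcal{T}_\alpha(I'_{ab},I_n)$. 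This is precisely the technique the paper itself uses for classes A~II, C~II, D~III, and C~I, and your key identity checks out: on the support of $\Delta_\alpha(\bm{p}')$, each connected component of $\Gamma(\alpha)$ with $2\mu_i$ vertices carries $\mu_i$ red and $\mu_i$ blue edges, forces a single common $p$-value, and contributes $\sum_{p\in[n]}\epsilon_p^{\mu_i}=a+(-1)^{\mu_i}b$, so the full sum is $(a+b)^{\ell^{\mathrm{e}}(\mu)}(a-b)^{\ell^{\mathrm{o}}(\mu)}=T^{\mathrm{BD\,I}}_{ab}(\alpha)$; and since $I'_{ab}$ and $I_n$ are both symmetric, no signs beyond the $\epsilon_{p_r}$ arise (in contrast with the $J$-twisted symplectic classes), which settles the sign-matching point you flagged. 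What each approach buys: the paper's citation route is shorter and keeps the chiral classes uniform; your direct route is self-contained, requiring only the orthogonal Weingarten theorem for Haar $R$, at the price of the graph-theoretic bookkeeping you correctly carried out. Your closing alternative --- observing that $\Tr(W^m)=\Tr\bigl((I'_{ab})^m\bigr)$ is deterministic and applying the orthogonal counterpart of Theorem 3.1 in \cite{CMS} --- is in fact verbatim the paper's own proof.
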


\begin{proof}
The proof is the same with that of Theorem \ref{thm:AIII}.
Apply Theorem 3.3 in \cite{CMS}.
\end{proof}

\begin{example} $n=a+b$.
$\mathrm{Wg}^{\mathrm{BD \, I}}(\mathrm{id}_2;a,b)=\frac{a-b}{n}$; 
\begin{align*}
\mathrm{Wg}^{\mathrm{BD \, I}}(\(\begin{smallmatrix} 1 & 2 & 3 & 4 \\
1 & 2 & 3 & 4 \end{smallmatrix}\);a,b)=&
\frac{(a-b)^2(n+1)-2n}{n(n+2)(n-1)},  \\
\mathrm{Wg}^{\mathrm{BD \, I}}(\(\begin{smallmatrix} 1 & 2 & 3 & 4 \\
1 & 3 & 2 & 4 \end{smallmatrix}\);a,b)=
\mathrm{Wg}^{\mathrm{BD \, I}}(\(\begin{smallmatrix} 1 & 2 & 3 & 4 \\
1 & 4 & 2 & 3 \end{smallmatrix}\);a,b)=&
\frac{4ab}{n(n+2)(n-1)}.
\end{align*}
\end{example}

Using zonal polynomials $Z_\lambda$ (see \cite[VII-2]{Mac}),
we can obtain the expansion
of 
$\mathrm{Wg}^{\mathrm{BD \, I}} (\cdot;a,b)$ in terms of zonal spherical functions
$\omega^\lambda$:
$$
\mathrm{Wg}^{\mathrm{BD \, I}}(\cdot;a,b)=
\frac{2^k k!}{(2k)!} \sum_{\lambda \vdash k} f^{2\lambda} \frac{
Z_{\lambda}
(1^a, (-1)^b)}
{Z_{\lambda}
(1^{a+b})} \omega^\lambda.
$$

\subsection{Class C II}

{\it The setting for C  II}: 
$G=\rmSp(2n)$, $K=\rmSp(2a) \times \rmSp(2b)$, $\Omega(g)= I''_{ab} g I''_{ab}$,
where
$n=a+b$ with $a \ge b \ge 1$ and 
$$
I''_{ab}= \begin{pmatrix} I'_{ab} & O \\ O & I'_{ab} \end{pmatrix}.
$$

The discussion is parallel to the previous subsection again.
We deal with the  random matrix
$$
W=W^{\mathrm{C\, II}} = \dual{S} I''_{ab} S.
$$
where $S$ is a $2n \times 2n$ Haar symplectic matrix.
It is associated with
the symmetric space
$\mathrm{Sp}(2n)/\mathrm{Sp}(2a) \times \mathrm{Sp}(2b)$.

We define 
the function $T^{\mathrm{C\, II}}_{ab}$ in $L^\epsilon(S_{2k},H_k)$
by
$$
T^{\mathrm{C \, II}}_{ab}(\sigma)=\mcal{T}_\sigma(J, JI_{ab}'').
$$
If $\sigma \in S_{2k}$ has the coset-type $\mu$, then
\begin{align*}
T^{\mathrm{C \, II}}_{ab}(\sigma)=& \epsilon(\sigma) 
\prod_{i=1}^{\ell(\mu)} T^{\mathrm{C \, II}}_{ab}(\sigma_{(\mu_i)}) 
=
\epsilon(\sigma)\prod_{i=1}^{\ell(\mu)} [-\Tr (J JI''_{ab} )^{\mu_i}] \\ 
=& (-1)^{k-\ell(\mu)} \epsilon(\sigma)\prod_{i=1}^{\ell(\mu)} \Tr (I''_{ab})^{\mu_i} 
=(-1)^{k-\ell(\mu)} \epsilon(\sigma) 2^{\ell(\mu)}
(a+b)^{\ell^{\mathrm{e}}(\mu)}
(a-b)^{\ell^{\mathrm{o}}(\mu)}.
\end{align*}
Here the second equality above follows by \eqref{eq:T-Tr-Sp}.

As in the case of class A II, we consider
$\Tilde{w}_{ij}=\langle e_i, W e_j \rangle$ instead of matrix elements $w_{ij}$.

\begin{thm} \label{thm:CII}
Let $W=W^{\mathrm{C \, II}}$ be the random matrix defined as above.
For a sequence $\bm{i}=(i_1,\dots,i_{2k})$, we have
$$
\bE[ \Tilde{w}_{i_1, i_2} \Tilde{w}_{i_3, i_4} \cdots \Tilde{w}_{i_{2k-1}, i_{2k}}]
= \sum_{\sigma \in M_{2k}} \Delta'_{\sigma}(\bm{i}) 
\mathrm{Wg}^{\mathrm{C \, II}} (\sigma;a,b).
$$
Here the function $\mathrm{Wg}^{\mathrm{C \, II}} (\cdot;a,b)$
in $L^\epsilon(S_{2k},H_k)$ is defined by
$$
\mathrm{Wg}^{\mathrm{C \, II}} (\cdot;a,b)
= T^{\mathrm{C \, II}}_{ab} \star \mathrm{Wg}^{\mathrm{Sp}}(\cdot; n).
$$
\end{thm}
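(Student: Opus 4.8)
The plan is to follow the same strategy as in the proofs of Theorems \ref{thm:A II} and \ref{thm:AIII}: reduce the moment of the $\Tilde{w}_{i,i'}$ to a moment of matrix elements of the Haar symplectic matrix $S$, apply the symplectic Weingarten calculus of Theorem \ref{thm:WgSp}, and then recognize the resulting internal contraction as the prescribed function $T^{\mathrm{C \, II}}_{ab}$. First I would rewrite a single entry. Since $\langle v, Xw\rangle = \langle \dual{X}v, w\rangle$ and $\dual{(\dual{S})}=S$, we have $\Tilde{w}_{i,i'} = \langle e_i, \dual{S} I''_{ab} S e_{i'}\rangle = \langle Se_i, I''_{ab} Se_{i'}\rangle$. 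Expanding $Se_i=\sum_p s_{pi}e_p$ and using bilinearity of $\langle\cdot,\cdot\rangle$ gives
$$
\Tilde{w}_{i,i'} = \sum_{p,q\in[2n]} s_{pi}\, s_{qi'}\, \langle e_p, I''_{ab} e_q\rangle = \sum_{p,q\in[2n]} s_{pi}\, s_{qi'}\, (JI''_{ab})_{pq},
$$
because $\langle e_p, I''_{ab} e_q\rangle = \trans{e_p} J I''_{ab} e_q = (JI''_{ab})_{pq}$.

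Next I would form the product over $r=1,\dots,k$ and take expectations, so that
$$
\bE\Big[\prod_{r=1}^k \Tilde{w}_{i_{2r-1},i_{2r}}\Big] = \sum_{\bm{p}\in[2n]^{\times 2k}} \bE\Big[\prod_{m=1}^{2k} s_{p_m i_m}\Big] \prod_{r=1}^k (JI''_{ab})_{p_{2r-1},p_{2r}}.
$$
Applying Theorem \ref{thm:WgSp} with row indices $\bm{p}$ and column indices $\bm{i}$ yields $\bE[\prod_m s_{p_m i_m}] = \sum_{\alpha,\beta\in M_{2k}} \Delta'_\alpha(\bm{p})\,\Delta'_\beta(\bm{i})\, \mathrm{Wg}^{\rmSp}(\alpha^{-1}\beta;n)$. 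Interchanging the order of summation and pulling out the factor depending on $\bm{i}$, the expression becomes $\sum_{\alpha,\beta\in M_{2k}} \Delta'_\beta(\bm{i})\, \mathrm{Wg}^{\rmSp}(\alpha^{-1}\beta;n)\, \big[\sum_{\bm{p}} \Delta'_\alpha(\bm{p}) \prod_r (JI''_{ab})_{p_{2r-1},p_{2r}}\big]$.

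The heart of the argument is the evaluation of the inner $\bm{p}$-sum. Since $\Delta'_\alpha(\bm{p}) = \prod_r \langle p_{\alpha(2r-1)},p_{\alpha(2r)}\rangle = \prod_r J_{p_{\alpha(2r-1)},p_{\alpha(2r)}}$, this sum is exactly $\mcal{T}_\alpha(J, JI''_{ab})$ by the definition of $\mcal{T}$, hence equals $T^{\mathrm{C \, II}}_{ab}(\alpha)$. Substituting and renaming $\beta=\sigma$, the expectation collapses to $\sum_{\sigma\in M_{2k}} \Delta'_\sigma(\bm{i}) \sum_{\alpha\in M_{2k}} T^{\mathrm{C \, II}}_{ab}(\alpha)\, \mathrm{Wg}^{\rmSp}(\alpha^{-1}\sigma;n)$, and the inner sum over $\alpha$ is precisely $(T^{\mathrm{C \, II}}_{ab}\star\mathrm{Wg}^{\rmSp}(\cdot;n))(\sigma) = \mathrm{Wg}^{\mathrm{C \, II}}(\sigma;a,b)$, giving the claimed formula. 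That $\mathrm{Wg}^{\mathrm{C \, II}}(\cdot;a,b)$ lies in $L^\epsilon(S_{2k},H_k)$ is automatic, since both $T^{\mathrm{C \, II}}_{ab}$ and $\mathrm{Wg}^{\rmSp}(\cdot;n)$ belong to this algebra and it is closed under $\star$.

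I expect no deep obstacle; the one place demanding care is the bookkeeping in the inner $\bm{p}$-sum, namely verifying that the $I''_{ab}$-twist is correctly absorbed into the second argument of $\mcal{T}$ as $JI''_{ab}$ (rather than, say, $I''_{ab}J$ or with a misplaced sign), so that the contraction reproduces exactly the generating function $T^{\mathrm{C \, II}}_{ab}$ defined just before the theorem. One should also double-check the consistency of which of $\bm{p},\bm{i}$ plays the role of rows versus columns when invoking Theorem \ref{thm:WgSp}.
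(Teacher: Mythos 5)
Your proposal is correct and follows essentially the same route as the paper's own proof: expand $\Tilde{w}_{i,i'}=\langle Se_i, I''_{ab}Se_{i'}\rangle$ in matrix entries of $S$, apply Theorem \ref{thm:WgSp} with $\bm{p}$ as row indices, identify the internal contraction $\sum_{\bm{p}}\Delta'_\alpha(\bm{p})\prod_r (JI''_{ab})_{p_{2r-1},p_{2r}}$ as $\mcal{T}_\alpha(J,JI''_{ab})=T^{\mathrm{C\,II}}_{ab}(\alpha)$, and collapse the remaining sum into the $\star$-convolution. The bookkeeping points you flag (the $JI''_{ab}$ placement and the row/column roles of $\bm{p}$ versus $\bm{i}$) are handled exactly as in the paper, so there is nothing to add.
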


\begin{proof}
Since 
$$
\Tilde{w}_{i,j}= \langle e_i, \dual{S} I''_{ab} S e_j \rangle = 
\langle S e_i,  I''_{ab} S e_j \rangle =
\sum_{p,q=1}^{2n} s_{pi} s_{qj} \langle e_p, I''_{ab} e_q \rangle,
$$
we have
$$
\bE[ \Tilde{w}_{i_1, i_2} \Tilde{w}_{i_3, i_4} \cdots \Tilde{w}_{i_{2k-1}, i_{2k}}]
= \sum_{p_1,\dots,p_{2k}} \bE[ s_{p_1 i_1} s_{p_2 i_2} \cdots s_{p_{2k} i_{2k}}]
\prod_{r=1}^k \langle p_{2r-1}, p_{2r} \rangle'',
$$
with $\langle p,q \rangle'':=\langle e_p,I_{ab}'' e_q \rangle$. 
The Weingarten calculus for symplectic groups gives
$$
\bE[ \Tilde{w}_{i_1, i_2} \Tilde{w}_{i_3, i_4} \cdots \Tilde{w}_{i_{2k-1}, i_{2k}}]=
\sum_{\sigma \in M_{2k}} \Delta'_\sigma(\bm{i}) 
\sum_{\tau \in M_{2k}} \mathrm{Wg}^{\mathrm{Sp}}(\tau^{-1} \sigma;n)
\widetilde{T}(\tau),
$$
where
\begin{align*}
\widetilde{T}(\tau) :=& \sum_{\bm{p}=(p_1,\dots,p_{2k})}
\Delta'_\tau(\bm{p}) \prod_{r=1}^k \langle p_{2r-1}, p_{2r} \rangle''
= \sum_{p_1,\dots,p_{2k}} \prod_{r=1}^k
\langle p_{\tau(2r-1)}, p_{\tau(2r)} \rangle \langle p_{2r-1}, p_{2r} \rangle'' \\
=&
\mcal{T}_\tau(J,JI_{ab}'')
=T^{\mathrm{C \, II}}_{ab}(\tau).
\end{align*}
\end{proof}

\begin{example} $n=a+b$.
$\mathrm{Wg}^{\mathrm{C \, II}}(\mathrm{id}_2;a,b)=\frac{a-b}{n}$; 
\begin{align*}
\mathrm{Wg}^{\mathrm{C \, II}}(\(\begin{smallmatrix} 1 & 2 & 3 & 4 \\
1 & 2 & 3 & 4 \end{smallmatrix}\);a,b)=&
\frac{(a-b)^2(2n-1)-n}{n(n-1)(2n+1)},  \\
(-1)\mathrm{Wg}^{\mathrm{C \, II}}(\(\begin{smallmatrix} 1 & 2 & 3 & 4 \\
1 & 3 & 2 & 4 \end{smallmatrix}\);a,b)=
\mathrm{Wg}^{\mathrm{C \, II}}(\(\begin{smallmatrix} 1 & 2 & 3 & 4 \\
1 & 4 & 2 & 3 \end{smallmatrix}\);a,b)=&
\frac{4ab}{n(n-1)(2n+1)}.
\end{align*}
\end{example}

Using twisted zonal polynomials $Z'_\lambda$ (see \cite[VII, Example 2-7]{Mac}),
we obtain the expansion of 
$\mathrm{Wg}^{\mathrm{C \, II}} (\cdot;a,b)$ in twisted spherical functions
$\pi^\lambda$:
$$
\mathrm{Wg}^{\mathrm{C \, II}}(\cdot;a,b)=
\frac{2^k k!}{(2k)!} \sum_{\lambda \vdash k} f^{\lambda \cup \lambda} \frac{
Z'_{\lambda}
(1^a, (-1)^b)}
{Z'_{\lambda}
(1^{a+b})} \pi^\lambda.
$$

\section{BdG ensembles}

In this section, we deal with matrix ensembles of types D~III and 
C~I.
They are called Bogoliubov-de Gennes (BdG) ensembles.

\subsection{Class  D III}

{\it The setting for D  III}: 
$G=\rmO(2n)$, $K=\mathrm{O}(2n) \cap \mathrm{Sp}(2n) \simeq \rmU(n)$, 
$\Omega(g)= (\dual{g})^{-1}$.

Let  $R$ be a $2n \times 2n$ Haar orthogonal matrix.
We consider the random matrix 
$$
V= V^{\mathrm{D \, III}}  = \dual{R} R,
$$
associated to the symmetric space $\mathrm{O}(2n)/\mathrm{U}(n)$.

We define the function $T^{\mathrm{D \, III}}_n$ on $S_{2k}$ by
$$
T^{\mathrm{D \, III}}_n(\sigma)= \mcal{T}_\sigma(I_{2n},J_n).
$$
It satisfies 
$T^{\mathrm{D \, III}}_n(\zeta \sigma \zeta')= \epsilon(\zeta) 
T^{\mathrm{D \, III}}_n(\sigma)$
for any $\sigma \in S_{2k}$ and $\zeta,\zeta' \in H_k$.
Note that
$$
T^{\mathrm{D \, III}}_n (\sigma_\mu)= 
\prod_{i=1}^{\ell(\mu)} T^{\mathrm{D \, III}}_n(\sigma_{(\mu_i)})
= \prod_{i=1}^{\ell(\mu)} \Tr (J^{\mu_i}) 
=
\begin{cases}
(-2n)^{\ell(\mu)} & \text{if $\mu$ is even},\\
0 & \text{otherwise}.
\end{cases}
$$
Here a partition $\mu$ is said to be {\it even}
if $\mu=2 \nu$ for some partition $\nu$.
In particular, $T^{\mathrm{D \, III}}_n(\sigma)=0$ ($\sigma \in S_{2k}$)
if $k$ is odd.

\begin{thm}
Let $V= V^{\mathrm{D \, III}}$ be the random matrix defined as above.
For a sequence $\bm{i}=(i_1,i_2,\dots,i_{2k})$ with an even number $k$, we have
$$
\bE [\Tilde{v}_{i_1,i_2} \Tilde{v}_{i_3,i_4} \cdots 
\Tilde{v}_{i_{2k-1}, i_{2k}}] = 
\sum_{\sigma \in M_{2k}} \Delta_\sigma(\bm{i}) \mathrm{Wg}^{\mathrm{D \, III}}(\sigma;n).
$$
Here $\mathrm{Wg}^{\mathrm{D \, III}}(\cdot;n)$ is the function on $S_{2k}$
defined by
$$
\mathrm{Wg}^{\mathrm{D \, III}}(\cdot;n) =T^{\mathrm{D \, III}}_n
 \star \mathrm{Wg}^{\mathrm{O}}(\cdot;2n).
$$ 
If $k$ is odd, then, for any sequence $\bm{i}=(i_1,i_2,\dots,i_{2k})$, 
$$
\bE [\Tilde{v}_{i_1,i_2} \Tilde{v}_{i_3,i_4} \cdots 
\Tilde{v}_{i_{2k-1}, i_{2k}}]  = 0.
$$
\end{thm}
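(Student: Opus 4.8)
The plan is to follow exactly the template of the proofs of Theorems~\ref{thm:A II} and~\ref{thm:CII}: express the matrix elements of $V=\dual{R}R$ through the entries of the Haar orthogonal matrix $R$, apply the Weingarten calculus for $\rmO(2n)$, and then recognize the resulting inner summation as the function $T^{\mathrm{D \, III}}_n$. The computation is the real-orthogonal analogue of the A~II computation, with $R$ playing the role of $U$.

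First I would rewrite $\Tilde{v}_{ij}$ in terms of the entries of $R$. Using the identity $J\dual{R}=\trans{R}J$ (an immediate consequence of $\dual{R}=J\trans{R}\trans{J}$ and $J^2=-I_{2n}$), one obtains
$$
\Tilde{v}_{ij}=\langle e_i,\dual{R}Re_j\rangle=\langle Re_i,Re_j\rangle=\sum_{p,q\in[2n]}r_{pi}\,r_{qj}\,\langle p,q\rangle .
$$
Substituting this into $\prod_{r=1}^{k}\Tilde{v}_{i_{2r-1},i_{2r}}$ and collecting the row indices into a single sequence $\bm{a}=(a_1,\dots,a_{2k})$, with $a_{2r-1},a_{2r}$ the two summation indices of the $r$-th factor, turns the product into $\sum_{\bm{a}}\big(\prod_{s=1}^{2k}r_{a_s,i_s}\big)\prod_{r=1}^{k}\langle a_{2r-1},a_{2r}\rangle$.

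Next I would take the expectation and apply the Weingarten calculus for $\rmO(2n)$ to $\bE[r_{a_1 i_1}\cdots r_{a_{2k}i_{2k}}]$, taking care that the dimension parameter is $2n$. This produces a double sum over matchings $\alpha,\beta\in M_{2k}$ with weights $\Delta_\alpha(\bm{a})\,\Delta_\beta(\bm{i})\,\mathrm{Wg}^{\mathrm{O}}(\alpha^{-1}\beta;2n)$, using $\Delta$ as in~\eqref{eq:Delta-ft}. Interchanging the order of summation, the dependence on $\bm{a}$ is confined to the factor
$$
\sum_{\bm{a}}\Delta_\alpha(\bm{a})\prod_{r=1}^{k}\langle a_{2r-1},a_{2r}\rangle=\mcal{T}_\alpha(I_{2n},J_n)=T^{\mathrm{D \, III}}_n(\alpha),
$$
which is the crux of the argument and the exact analogue of the identity $\mcal{T}_\sigma(J,J)=T^{\mathrm{Sp}}(\sigma;n)$ in~\eqref{eq:TJJ}, now with the symmetric $I_{2n}$ in place of the skew $J$. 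What remains is $\sum_{\beta\in M_{2k}}\Delta_\beta(\bm{i})\sum_{\alpha\in M_{2k}}T^{\mathrm{D \, III}}_n(\alpha)\,\mathrm{Wg}^{\mathrm{O}}(\alpha^{-1}\beta;2n)$, and by the definition of the convolution $\star$ this equals
$$
\sum_{\beta\in M_{2k}}\Delta_\beta(\bm{i})\,\big(T^{\mathrm{D \, III}}_n\star\mathrm{Wg}^{\mathrm{O}}(\cdot;2n)\big)(\beta)=\sum_{\beta\in M_{2k}}\Delta_\beta(\bm{i})\,\mathrm{Wg}^{\mathrm{D \, III}}(\beta;n),
$$
which is the asserted formula (the stated transformation law of $T^{\mathrm{D \, III}}_n$ under $H_k$ guarantees that this lands in the correct function space, so the choice of $M_{2k}$-representatives causes no ambiguity).

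Finally, the odd-$k$ case is immediate: a partition $\mu\vdash k$ can be even only when $k$ is even, so the values $T^{\mathrm{D \, III}}_n(\sigma_\mu)$ all vanish for odd $k$; hence $T^{\mathrm{D \, III}}_n\equiv0$, and therefore $\mathrm{Wg}^{\mathrm{D \, III}}(\cdot;n)=T^{\mathrm{D \, III}}_n\star\mathrm{Wg}^{\mathrm{O}}(\cdot;2n)\equiv0$, making the displayed expectation vanish term by term. The only genuinely delicate points are the dual-matrix reduction in the first step and keeping the row/column index bookkeeping straight so that $\Delta_\alpha$ pairs precisely the indices entering the skew form in the crux identity; I expect no deeper obstacle, since everything parallels the A~II and C~II arguments once the substitution $X=I_{2n}$, $Y=J_n$ in $\mcal{T}$ is in place.
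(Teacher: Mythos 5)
Your proposal is correct and takes essentially the same approach as the paper's proof: expand $\Tilde{v}_{ij}=\langle Re_i,Re_j\rangle=\sum_{p,q}r_{pi}r_{qj}\langle p,q\rangle$, apply the orthogonal Weingarten calculus with dimension parameter $2n$, and identify the inner sum over row indices as $\mcal{T}_\alpha(I_{2n},J_n)=T^{\mathrm{D\,III}}_n(\alpha)$, so that the $\star$-convolution produces $\mathrm{Wg}^{\mathrm{D\,III}}(\cdot;n)$. Your odd-$k$ argument (no even partition of odd $k$, hence $T^{\mathrm{D\,III}}_n\equiv 0$ via the double-coset decomposition and the $H_k$-transformation law) also matches the paper's observation.
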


\begin{proof}
The proof is similar to that of Theorem \ref{thm:CII}.
The Weingarten calculus for a Haar orthogonal matrix $R$ gives
$$
\bE [\Tilde{v}_{i_1,i_2} \Tilde{v}_{i_3,i_4} \cdots 
\Tilde{v}_{i_{2k-1}, i_{2k}}]  = 
\sum_{\sigma \in M_{2k}} \Delta_\sigma(\bm{i}) 
\sum_{\tau \in M_{2k}} \mathrm{Wg}^{\mathrm{O}}(\tau^{-1} \sigma;2n) \widetilde{T}(\tau)
$$
where
$$
\widetilde{T}(\tau)=\sum_{\bm{p}=(p_1,\dots,p_{2k})} \Delta_\tau(\bm{p})
\prod_{r=1}^k \langle p_{2r-1},p_{2r} \rangle 
= \mcal{T}_\tau(I,J) = T_n^{\mathrm{D \, III}}(\tau)
$$
for any $\tau \in S_{2k}$.
\end{proof}

Note that
$$
\mathrm{Wg}^{\mathrm{D \, III}}(\zeta\sigma\zeta';n)=
\epsilon(\zeta)\mathrm{Wg}^{\mathrm{D \, III}}(\sigma;n) \qquad (
\sigma\in S_{2k}, \ \zeta,\zeta' \in H_k),
$$
and hence $\mathrm{Wg}^{\mathrm{D \, III}}(\zeta;n)=0$ for $\zeta \in H_k$.
Furthermore, $\mathrm{Wg}^{\mathrm{D \, III}}(\sigma;n) =0 \ (\sigma \in S_{2k})$ if $k$ is odd.

\begin{example}
Since $\(\begin{smallmatrix} 1 & 2 & 3 & 4 \\ 1 & 3 & 2 & 4 \end{smallmatrix}\)
=(3 \ 4)\(\begin{smallmatrix} 1 & 2 & 3 & 4 \\ 1 & 4 & 2 & 3 \end{smallmatrix}\)$
and $(3 \ 4) \in H_2$, we have
$$
(-1)\mathrm{Wg}^{\mathrm{D \, III}}
\( \(\begin{smallmatrix} 1 & 2 & 3 & 4 \\ 1 & 3 & 2 & 4 \end{smallmatrix}\);n\)
=\mathrm{Wg}^{\mathrm{D \, III}}
\( \(\begin{smallmatrix} 1 & 2 & 3 & 4 \\ 1 & 4 & 2 & 3 \end{smallmatrix}\);n\)
= \frac{-1}{2n-1}.
$$
\end{example}

\subsection{Class C I}

{\it The setting for C  I}: 
$G=\rmSp(2n)$, $K=\rmU(n)$, 
$\Omega(g)= I'_{nn} g I'_{nn}$.
Here the fix-point set of $\Omega$ in $G$ is 
$$
\left\{ \begin{pmatrix}
U & O \\ O & \overline{U} \end{pmatrix}
 \ \bigm| \ U \in \rmU(n) 
\right\} \simeq \rmU(n).
$$

Let  $S$ be a $2n \times 2n$ Haar symplectic matrix.
We consider the random matrix 
$$
W^{\mathrm{C \, I}}= \dual{S} I'_{nn} S,
$$ 
instead of $V^{\mathrm{C \, I}}= \Omega(S)^{-1}S=I'_{nn} S^D I'_{nn} S$,
which is associated to the symmetric space $\mathrm{Sp}(2n)/\mathrm{U}(n)$.

We define the function $T^{\mathrm{C \, I}}_n$ on $S_{2k}$ by
$$
T^{\mathrm{C \, I}}_n(\sigma)=\mcal{T}_\sigma(J, JI'_{nn}).
$$
Since $JI'_{nn}= - \(\begin{smallmatrix} O_n& I_n \\ I_n & O_n \end{smallmatrix}\)$
is symmetric and $J$ is skew-symmetric,
the function  $T^{\mathrm{C \, I}}_n$ satisfies
$T^{\mathrm{C \, I}}_n(\zeta \sigma \zeta')=T^{\mathrm{C \, I}}_n(\sigma) \epsilon(\zeta')$ for $\sigma \in S_{2k}$ and $\zeta,\zeta' \in H_k$.
For $\sigma=\sigma_\mu$, we have 
\begin{align*}
T^{\mathrm{C \, I}}_n (\sigma_\mu)=& 
\prod_{i=1}^{\ell(\mu)} T^{\mathrm{C \, I}}_n(\sigma_{(\mu_i)})
= \prod_{i=1}^{\ell(\mu)} [-\Tr (JJI'_{nn} )^{\mu_i}] 
= (-1)^{\ell(\mu)} \prod_{i=1}^{\ell(\mu)}
\Tr (-I'_{nn})^{\mu_i}\\
=&
\begin{cases}
(-2n)^{\ell(\mu)} & \text{if $\mu$ is even},\\
0 & \text{otherwise}.
\end{cases}
\end{align*}
In particular,
$T^{\mathrm{C \, I}}_n (\sigma)=
T^{\mathrm{D \, III}}_n (\sigma^{-1})=\pm T^{\mathrm{D \, III}}_n (\sigma)$
for all $\sigma \in S_{2k}$.

\begin{thm}
Let $W= W^{\mathrm{C \, I}}$ be the random matrix defined as above.
For a sequence $\bm{i}=(i_1,i_2,\dots,i_{2k})$ with an even number $k$, we have
$$
\bE [ \Tilde{w}_{i_1,i_2} \Tilde{w}_{i_3,i_4} \cdots \Tilde{w}_{i_{2k-1}, i_{2k}}] = 
\sum_{\sigma \in M_{2k}} \Delta'_\sigma(\bm{i}) \mathrm{Wg}^{\mathrm{C \, I}}(\sigma;n).
$$
Here $\mathrm{Wg}^{\mathrm{C \, I}}(\cdot;n)$ is the function on $S_{2k}$
defined by
$$
\mathrm{Wg}^{\mathrm{C \, I}}(\cdot;n) =T^{\mathrm{C \, I}}_n \star \mathrm{Wg}^{\mathrm{Sp}}(\cdot;n).
$$ 
If $k$ is odd, then, for any sequence $\bm{i}=(i_1,i_2,\dots,i_{2k})$,
$$
\bE [ \Tilde{w}_{i_1,i_2} \Tilde{w}_{i_3,i_4} \cdots \Tilde{w}_{i_{2k-1}, i_{2k}}]  = 0.
$$
\end{thm}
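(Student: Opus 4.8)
The plan is to follow the proof of Theorem~\ref{thm:CII} almost verbatim, replacing $I''_{ab}$ by $I'_{nn}$ throughout, and then to dispatch the parity statement by a short remark about $T^{\mathrm{C \, I}}_n$. First I would rewrite each entry of $W=\dual{S}I'_{nn}S$ as a bilinear form in the columns of $S$: since the dual operation is an involution and $\langle v, Xw\rangle=\langle\dual{X}v,w\rangle$, one has
$$
\Tilde{w}_{i,j}=\langle e_i,\dual{S}I'_{nn}Se_j\rangle=\langle Se_i, I'_{nn}Se_j\rangle=\sum_{p,q=1}^{2n}s_{pi}s_{qj}\langle p,q\rangle',
$$
where $\langle p,q\rangle':=\langle e_p, I'_{nn}e_q\rangle$. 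Substituting this into the product and taking the expectation reduces everything to the mixed moment $\bE[s_{p_1i_1}\cdots s_{p_{2k}i_{2k}}]$, which is a genuine symplectic moment of even degree $2k$ irrespective of the parity of $k$.

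Next I would apply the Weingarten calculus for symplectic groups (Theorem~\ref{thm:WgSp}) to this moment and rearrange the resulting sum, collecting all factors depending only on the indices $\bm{p}=(p_1,\dots,p_{2k})$ into a single coefficient
$$
\widetilde{T}(\tau)=\sum_{p_1,\dots,p_{2k}}\Delta'_\tau(\bm{p})\prod_{r=1}^k\langle p_{2r-1},p_{2r}\rangle'\qquad(\tau\in M_{2k}).
$$
The key identification is that $\langle e_p,I'_{nn}e_q\rangle=(JI'_{nn})_{pq}$ while $\langle p,q\rangle=J_{pq}$, so that $\widetilde{T}(\tau)=\mcal{T}_\tau(J,JI'_{nn})=T^{\mathrm{C \, I}}_n(\tau)$ by the definition of $T^{\mathrm{C \, I}}_n$. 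The remaining sum over $\tau\in M_{2k}$ is then exactly the $\star$-convolution, so that $\sum_{\tau}T^{\mathrm{C \, I}}_n(\tau)\,\mathrm{Wg}^{\mathrm{Sp}}(\tau^{-1}\sigma;n)=(T^{\mathrm{C \, I}}_n\star\mathrm{Wg}^{\mathrm{Sp}})(\sigma)=\mathrm{Wg}^{\mathrm{C \, I}}(\sigma;n)$, giving the stated formula for every $k$.

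Finally, for the parity statement I would invoke the computation of $T^{\mathrm{C \, I}}_n(\sigma_\mu)$ recorded just before the theorem: it equals $(-2n)^{\ell(\mu)}$ when $\mu$ is even and $0$ otherwise. When $k$ is odd no partition $\mu\vdash k$ is even, so $T^{\mathrm{C \, I}}_n$ vanishes on every representative $\sigma_\mu$; by the twisted bi-invariance $T^{\mathrm{C \, I}}_n(\zeta\sigma\zeta')=T^{\mathrm{C \, I}}_n(\sigma)\epsilon(\zeta')$ together with the fact that the coset-type labels the double cosets $H_k\sigma H_k$, it vanishes identically on $S_{2k}$. Hence $\mathrm{Wg}^{\mathrm{C \, I}}(\cdot;n)\equiv0$ and the expectation is $0$. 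I expect the only point requiring genuine care to be the index bookkeeping in the identification $\widetilde{T}=T^{\mathrm{C \, I}}_n$ — keeping straight which of the two forms $\langle\cdot,\cdot\rangle$ and $\langle\cdot,\cdot\rangle'$ is attached to the $\tau$-permuted indices and which to the fixed ones, since the direction of the convolution and the correct parameter $n$ in $\mathrm{Wg}^{\mathrm{Sp}}$ both hinge on this matching; everything else transcribes directly from the class~C~II argument.
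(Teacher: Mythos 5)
Your proposal is correct and follows essentially the same route as the paper's proof: rewrite $\Tilde{w}_{i,j}=\langle Se_i, I'_{nn}Se_j\rangle$, apply Theorem \ref{thm:WgSp}, identify the inner sum as $\widetilde{T}(\tau)=\mcal{T}_\tau(J,JI'_{nn})=T^{\mathrm{C\,I}}_n(\tau)$, and recognize the $\tau$-sum as the $\star$-convolution. Your explicit handling of the odd-$k$ case via the vanishing of $T^{\mathrm{C\,I}}_n$ on double-coset representatives, propagated by the twisted invariance $T^{\mathrm{C\,I}}_n(\zeta\sigma\zeta')=T^{\mathrm{C\,I}}_n(\sigma)\epsilon(\zeta')$, is exactly the argument the paper leaves implicit.
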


\begin{proof}
It is proved in a usual way.
The Weingarten calculus for a Haar symplectic matrix $S$ gives
$$
\bE [ \Tilde{w}_{i_1,i_2} \Tilde{w}_{i_3,i_4} \cdots \Tilde{w}_{i_{2k-1}, i_{2k}}] = 
\sum_{\sigma \in M_{2k}} \Delta_\sigma'(\bm{i}) 
\sum_{\tau \in M_{2k}} \mathrm{Wg}^{\mathrm{Sp}}(\tau^{-1} \sigma;n) \widetilde{T}(\tau)
$$
where
$$
\widetilde{T}(\tau):=\sum_{\bm{p}=(p_1,\dots,p_{2k})} \Delta'_\tau(\bm{p})
\prod_{r=1}^k \langle e_{p_{2r-1}}, I'_{nn} e_{p_{2r}} \rangle 
= \mcal{T}_\tau(J, JI_{nn}') = T_n^{\mathrm{C \, I}}(\tau)
$$
for any $\tau \in S_{2k}$.
\end{proof}

The function $\mathrm{Wg}^{\mathrm{C \, I}}(\cdot;n)$ on $S_{2k}$ satisfies
$$
\mathrm{Wg}^{\mathrm{C \, I}}(\zeta\sigma\zeta';n)=
\epsilon(\zeta')\mathrm{Wg}^{\mathrm{C \, I}}(\sigma;n) \qquad (
\sigma\in S_{2k}, \ \zeta,\zeta' \in H_k),
$$
and hence $\mathrm{Wg}^{\mathrm{C \, I}}(\zeta;n)=0$ for $\zeta \in H_k$.
Furthermore, $\mathrm{Wg}^{\mathrm{C \, I}}(\sigma;n) =0 \ (\sigma \in S_{2k})$ if $k$ is odd.

\begin{example}
$$
\mathrm{Wg}^{\mathrm{C \, I}}
\( \(\begin{smallmatrix} 1 & 2 & 3 & 4 \\ 1 & 3 & 2 & 4 \end{smallmatrix}\);n\)
=\mathrm{Wg}^{\mathrm{C \, I}}
\( \(\begin{smallmatrix} 1 & 2 & 3 & 4 \\ 1 & 4 & 2 & 3 \end{smallmatrix}\);n\)
= \frac{-1}{2n+1}.
$$
\end{example}

\section{Conclusion}

We have made methods for computations of 
moments of matrix elements from classical compact Lie groups and
classical compact symmetric spaces.
Write $\mcal{C}=$ A, B/D, C for unitary, orthogonal, symplectic groups, respectively.
The moment for a classical group is given by the double sum
$$
\sum_{\sigma} \sum_{\tau} 
\text{($\Delta$-function in $\sigma$)} \times
\text{($\Delta$-function in $\tau$)} \times  
\mathrm{Wg}^{\mcal{C}} (\sigma^{-1} \tau;n),
$$
whereas
that for a classical compact symmetric space
is given by the single sum
$$
\sum_\sigma \text{($\Delta$-function in $\sigma$)} \times 
\mathrm{Wg}^{\mcal{C}} (\sigma;n).
$$
Here the $\Delta$-function is 
\begin{itemize}
 \item  $\delta_\sigma (\cdot,\cdot)$ defined in \eqref{eq:delta-ft}
  if $\mcal{C}=$ A, A~I, A~II, A~III,
  \item
  $\Delta_\sigma(\cdot)$ defined in \eqref{eq:Delta-ft} 
   if $\mcal{C}=$ B/D, BD~I, D~III, 
  \item
    $\Delta_\sigma'(\cdot)$ defined in \eqref{eq:Delta'-ft}
    if  $\mcal{C}=$ C, C~I, C~II,
\end{itemize}
and the Weingarten function $\mathrm{Wg}^{\mcal{C}}$ belongs to
\begin{itemize}
\item $\mcal{Z}(L(S_k))$ if $\mcal{C}=$ A, A~III,
\item $L(S_{2k},H_k)$ if $\mcal{C}=$ B/D, A~I, BD~I,
\item $L^\epsilon(S_{2k},H_k)$ if $\mcal{C}=$ C, A~II, C~II.
\end{itemize}
The Weingarten functions for $\mcal{C}=$ D~III or C~I differ from others.
In fact, if we let $F(\sigma)$ to be $\mathrm{Wg}^{\mathrm{D \, III}}(\sigma;n)$ or
$\mathrm{Wg}^{\mathrm{C \, I}}(\sigma^{-1};n)$, then
$F$ is the function on $S_{2k}$ satisfying the property
$$
F(\zeta \sigma \zeta')= \epsilon(\zeta) F( \sigma) 
\qquad (\sigma \in S_{2k}, \ \zeta,\zeta' \in H_k).
$$
In particular, $F$ identically vanishes  on $H_k$, and, moreover,
on $S_{2k}$ if $k$ is odd.
Weingarten functions except D~III and C~I have Fourier expansions in
$\chi^\lambda$, $\omega^\lambda$, or $\pi^\lambda$. 
However, we could not find such expansions for these two cases.


\section*{Acknowledgments}

The author's work was 
supported by JSPS Grant-in-Aid for Young Scientists (B) 22740060.


\bigskip

\noindent
\textsc{Sho Matsumoto} \\
Graduate School of Mathematics, Nagoya University, Nagoya, 464-8602, Japan. \\
\verb|sho-matsumoto@math.nagoya-u.ac.jp|

\end{document}